\newtheorem{theorem}{Theorem}[section]
\newtheorem{corollary}[theorem]{Corollary}
\newtheorem{lemma}[theorem]{Lemma}
\newtheorem{proposition}[theorem]{Proposition}
\theoremstyle{definition}
\newtheorem{definition}[theorem]{Definition}
\newtheorem{remark}[theorem]{Remark}
\numberwithin{equation}{section}
\newenvironment{pot2}{\paragraph{Proof of Theorem \ref{INV-Conv}}}{\hfill$\qed$}
\newenvironment{pot3}{\paragraph{Proof of Theorem \ref{Stoke}}}{\hfill$\qed$}
\newenvironment{pot4}{\paragraph{Proof of Theorem \ref{MCIEq88}}}{\hfill$\qed$}
\numberwithin{equation}{section}
\begin{document}
\title{Scaling limits of discrete holomorphic functions}
\author[G. B. Ren]{Guangbin Ren}
\author[Z. P. Zhu]{Zeping Zhu}
\address{Guangbin Ren, Department of Mathematics, University of Science and
Technology of China, Hefei 230026, China}
\email{rengb$\symbol{64}$ustc.edu.cn}
\address{Zeping Zhu, Department of Mathematics, University of Science and
Technology of China, Hefei 230026,
China}
\email{zzp$\symbol{64}$mail.ustc.edu.cn}

\thanks{This work was supported by the NNSF  of China (11071230).}

\begin{abstract}  One of the most natural and
challenging  issues in discrete complex analysis is to
prove the
convergence of discrete holomorphic functions to their continuous counterparts.
 This article is to solve the open problem
 in the general setting. To this end we introduce
 new concepts of {\it discrete  surface measure} and {\it discrete  outer normal vector} and establish the discrete  Cauchy-Pompeiu integral formula,
 \begin{eqnarray*} f(\zeta)=\displaystyle{\int_{\partial B^h}} \mathcal{K}^h(z,\zeta) f(z)dS^h(z)+\displaystyle{\int_{ B^h}} E^h(\zeta-z) \partial_{\bar z}^h f (z)dV^h(z),\end{eqnarray*}
which results in   the uniform convergence of the scaling limits of discrete holomorphic functions up to  second order derivatives in the standard square lattices.
\end{abstract}

\keywords{Discrete complex analysis, Discrete holomorphic function, Green's theorem, Cauchy-Pompeiu formula, Scaling limit.}
\subjclass[2010]{30G25, 39A12, 49M25, 31C20}

\maketitle
\baselineskip20pt
\section{Introduction}

Discrete complex analysis aims to find a kind of mathematical theory on lattices similar to its continuous
counterpart.
A typical problem in  discrete complex analysis is  the  convergence of the scaling limits of discrete holomorphic functions.

A convergence problem for a certain kind of discrete holomorphic functions has been
studied  by Smirnov and his collaborators  and   it is of eminent importance in proving the  conjecture about the conformal invariance in the Ising model  \cite{36,37,38}.
 Skopenkov \cite{35}   considered a different   convergence problem  and he  proved that the Dirichlet boundary value problem for the real part of a discrete analytic function has a unique solution and this solution uniformly converges to a harmonic function in the case of orthogonal lattices. This issue  was also investigated  by
 Courant-Friedrichs-Lewy \cite{31} for square lattices, and by Chelkak-Smirnov \cite{33} for rhombic lattices.
Nevertheless, the
convergence of the scaling limits of discrete holomorphic functions  in the general case is still an open problem.

The goal of this article is to solve this  open  problem  in the case of the standard square lattices.
 This will depend heavily  on  our   discrete Cauchy-Pompeiu integral formula
\begin{eqnarray*} f(\zeta)=\displaystyle{\int_{\partial B^h}} \mathcal{K}^h(z,\zeta) f(z)dS^h(z)+\displaystyle{\int_{ B^h}} E^h(\zeta-z) \partial_{\bar z}^h f (z)dV^h(z),\end{eqnarray*}
which clearly respects its continuous version.

To this end, a new version of the integral theory in the  discrete complex analysis is developed on square lattices in the article.
The classical one was initiated by  Isaacs \cite{1} and  Ferrand  \cite{2},
and further developed  by Duffin \cite{3},  Zeilberger \cite{8}, and others.
This function theory has been generalized to the cases of more complicated graphs by Mercat  \cite{17} and Bobenko-Mercat-Suris \cite{18}. Moreover,
discrete complex analysis has found its applications in fields ranging from
 combination geometry \cite{26,Steph}, numerical analysis \cite{23}, computer graphics \cite{24,25}, and statistical physics \cite{21}.
 However, the theory does not fully mature  even on square lattices.

Our new theory has several  advantages over the  old ones. Firstly,   it distinguishes itself by the elegant discrete Cauchy-Pompeiu integral formula,
which is totally  analogous to its continuous version.  Therefore it has good performance  in applications.
Next,   as opposed to discrete Clifford theory  where maps from $\mathbb Z_h^2$ to $\mathbb R^{16}$ have to be  considered \cite{Somm}, we can  consider maps from $\mathbb Z_h^2$ to $\mathbb C$ as expected without raising the dimension of the target domain.
Finally,  we can
prove the uniform convergence  of discrete holomorphic function up to second order  derivatives.

It is worth pointing  out that
although our  results are stated  only in the complex plane,  our approach is    applicable   to   higher dimensional spaces.

This paper is organized as follows:
In Section \ref{S2}, we introduce some basic concepts  such as {\it discrete  surface measure} and {\it discrete  outer normal vector}. In Sections \ref{S3} and \ref{S5} we establish the integral theory of discrete holomorphic functions; in particular, we obtain the discrete Cauchy-Pompeiu formula. In Section \ref{S6} and \ref{S7}, we study the convergence problem of discrete holomorphic functions and solve the open problem as stated in  Theorems \ref{Conv}, \ref{Conv2}, \ref{INV-Conv} and \ref{Conv-D}.
Appendix  in Section \ref{S9} contains all the technical results.

\section{Discretization of operators, surface measure, and normal vectors  }\label{S2}

Basic  elements  in discrete complex analysis are introduced in this section.

\subsection{Discrete $\bar\partial$-operator}
 We will work on the discrete lattices
 $\mathbb{Z}_h^2$, a discretization of $\mathbb{R}^2$.
Here  $\mathbb{Z}_h=h\mathbb Z$  for any given positive parameter $h$.
We shall study discrete  holomorphic functions related to the symmetric discretization of the classical $\bar\partial$-operator.

\begin{definition}  The discrete $\bar\partial$-operator and its conjugate are defined respectively  as
 $$\partial_{\bar z}^h:=\frac{1}{2}(\partial_1^h+i\partial_2^h),
 \qquad \partial_{ z}^h:=\frac{1}{2}(\partial_1^h-i\partial_2^h),$$
 where $\partial_i^h$ $(i=1,2)$ are symmetric difference operator
 $$\partial_i^h=\displaystyle{\frac{1}{2}}(\partial_i^{+,h}+\partial_i^{-,h}). \qquad $$
Here $\partial_i^{+,h}$ and $\partial_i^{-,h}$ stand for
 the  forward and backward difference operators respectively, i.e.,
  \begin{eqnarray*}
  \partial_i^{+,h}f(x)&=&\frac{f(x+he_i)-f(x)}{h},\\
  \partial_i^{-,h}f(x)&=&\frac{f(x)-f(x-he_i)}{h},
  \end{eqnarray*}
  where $\{e_1, e_2\}$ is  the standard basis of $\mathbb R^2$.
\end{definition}

 Notice that  the symmetric discretization $\partial_{\bar z}^h$  converges to the classical differential operator $\partial_{\bar z}$ as $h$ tends to zero.

It is worth noting that $\partial_{\bar z}^hf $ makes sense on $B$ only for those functions $f$ whose
  definition domain contain $\overline{B}$, the discrete closure of $B$.

 \begin{definition}\label{def:discrete-boudary}
 Let $B$ be a subset of $\mathbb{Z}_h^2$. We define
its discrete closure and  interior respectively as
$$\overline{B}:=B\cup\partial B, \qquad B^\circ:=B\setminus \partial B, $$
 where $\partial B$ is the \textit{discrete boundary} of $B$ consisting of every point $z \in \mathbb{Z}_h^2$ whose
 neighborhood
 $$
  N(z) := \{ z, z\pm h, z\pm hi \}
  $$
  has  some point inside $B$ and some other point outside $B$, i.e.,
 $$
 \partial B := \Big\{z \in \mathbb{Z}_h^2: N(z) \cap B \neq \emptyset \quad \mathrm{and}\quad N(z) \setminus B\neq \emptyset \Big\}.
 $$
 \end{definition}

Now we can introduce the concept of discrete holomorphic functions.

\begin{definition}\label{def:dis-hol}
 A function $f:\overline{B} \longrightarrow \mathbb{C}$ with  $B\subset \mathbb{Z}_h^2$ is said to be discrete holomorphic on $B$ if for any $z \in B$ we have $\partial_{\bar z}^hf(z)=0. $
\end{definition}

\subsection{Discrete   surface measure and discrete  normal vector}\label{S3}
The  concepts of   discrete   surface measure and discrete outer normal vector
are essential to our theory.

\begin{definition}\label{EDF}
Let $B$ be a subset of $\mathbb{Z}_h^2$. The discrete boundary measure $S$ on $\partial B$  is defined  as
$$S(U)=\sum_{z\in U}s(z),\quad \forall\text{ } U\subset\partial B, $$
where  $s:\partial B \longrightarrow \mathbb{R}$ is the density function
\begin{eqnarray}\label{def:density-function-s}s=\frac{h^2}{2}\sqrt{\sum_{i=1}^2\Big((\partial_i^{+,h}\chi_B)^{ 2}+(\partial_i^{-,h}\chi_B)^{ 2}}\Big),
\end{eqnarray}
and $\chi_B$ denotes the characteristic function of  $B$.
\end{definition}

\begin{remark} If we extend  the density function $s$ to the whole $\mathbb{Z}_h^2$
by  zero extension, then  (\ref{def:density-function-s})  holds true on  $\mathbb Z_h^2$ since $$\sum_{i=1,2}\Big((\partial_i^{+,h}\chi_B)^{ 2}+(\partial_i^{-,h}\chi_B)^{ 2}\Big)=0 \qquad \mbox{on}\quad  \mathbb{Z}_h^2\backslash\partial B.$$
\end{remark}

\begin{definition}\label{ENV}
 The discrete outer normal vector at a boundary point of $B\subset \mathbb{Z}_h^2$
 is a vector
$$\vec{n}=(n_1^+,n_1^-,n_2^+,n_2^-),$$
defined by
$$n_l^{\pm}=\frac{-2\;\partial_l^{\pm,h}\chi_B}{\sqrt{\sum\limits_{i=1}^2\left((\partial_i^{+,h}\chi_B)^{ 2}+(\partial_i^{-,h}\chi_B)^{ 2}
\right)}}, \qquad l=1,2.$$
\end{definition}
It is evident that the Euclidean norm of $\vec{n}$ is always equal to $2$ on $\partial B$.

\section{Green's formula on $\mathbb{Z}_h^2$}\label{S3}

With the concepts of  the discrete surface measure $S$ and the  discrete outer normal vector $\vec{n}$ above, we can now establish the discrete version of  Green's formula  in this section.

Let  $V^h$ be the Haar measure on the group $\mathbb{Z}_h^2$. Then for any $f:\mathbb{Z}_h^2\to \mathbb{C}$ we have
$$\int_{\mathbb{Z}_h^2}fdV^h=\sum_{z\in \mathbb{Z}_h^2}f(z)h^{2}.$$
As usual, for any  $B\subset \mathbb{Z}_h^2$,
$$\int_{B}fdV^h:=\int_{\mathbb{Z}_h^2}f\chi_BdV^h.$$

\begin{theorem}[Green's formula]\label{Stoke}
Let $B$ be a bounded subset of $\mathbb{Z}_h^2$. For any function $f:\overline{B}\longrightarrow\mathbb{R}$, we have
$$\int_{\partial B}fn_i^{\pm}dS=\int_B\partial_i^{\mp,h}fdV^h, \quad (i=1,2). $$
\end{theorem}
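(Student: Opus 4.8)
The plan is to strip away the square roots, which are the only superficially complicated feature, and to reduce the identity to the elementary discrete summation-by-parts formula on $\mathbb{Z}_h^2$. The crucial observation is that the normalizing radical in the definition of $n_i^{\pm}$ is exactly the radical appearing in the density $s$, and on $\partial B$ this radical is strictly positive (at a boundary point $N(z)$ meets both $B$ and its complement, so $\chi_B$ cannot be constant on $N(z)$, forcing some difference to be nonzero). Writing $\int_{\partial B}f n_i^{\pm}\,dS=\sum_{z\in\partial B}f(z)\,n_i^{\pm}(z)\,s(z)$ and multiplying the two fractions, the radicals cancel and leave the clean expression
$$\int_{\partial B}f n_i^{\pm}\,dS=-h^2\sum_{z\in\partial B}f(z)\,\partial_i^{\pm,h}\chi_B(z).$$
By the Remark, $\partial_i^{\pm,h}\chi_B$ vanishes identically on $\mathbb{Z}_h^2\setminus\partial B$ (each summand in the radical is a nonnegative square, so the sum being zero forces every term to vanish), so the sum may be taken over all of $\mathbb{Z}_h^2$ without changing its value. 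On the other side, by definition $\int_B\partial_i^{\mp,h}f\,dV^h=h^2\sum_z\chi_B(z)\,\partial_i^{\mp,h}f(z)$. Thus the theorem reduces to the single identity
$$\sum_z f(z)\,\partial_i^{\pm,h}\chi_B(z)=-\sum_z\chi_B(z)\,\partial_i^{\mp,h}f(z).$$

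This last identity is the discrete summation-by-parts (adjointness) relation $\sum_z g\,\partial_i^{+,h}\phi=-\sum_z\phi\,\partial_i^{-,h}g$ together with its mirror image with $+$ and $-$ interchanged. Since $B$ is bounded, every sum is finite and the substitution $z\mapsto z\mp he_i$ reindexes without producing boundary terms at infinity; carrying out this shift directly verifies both adjointness relations, and applying them with $\phi=\chi_B$, $g=f$ yields the reduced identity, the sign pattern $\pm\leftrightarrow\mp$ dropping out automatically. The two cases of the theorem, namely $n_i^{+}$ paired with $\partial_i^{-,h}$ and $n_i^{-}$ paired with $\partial_i^{+,h}$, correspond exactly to the two adjointness relations.

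The one point that requires genuine care, and that I expect to be the only real obstacle, is that $f$ is given only on $\overline{B}$, so I must check that every value of $f$ invoked above is legitimate and that the global index shift does not secretly use data outside $\overline{B}$. On the left, a term survives only where $\partial_i^{\pm,h}\chi_B(z)\neq 0$, i.e. where $\chi_B$ differs at $z$ and $z\pm he_i$; then both points lie in $N(z)$ and witness $z\in\partial B\subset\overline{B}$, so $f(z)$ is defined. On the right, a term survives only for $z\in B$, and there $\partial_i^{\mp,h}f(z)$ calls on $f(z\mp he_i)$: if $z\mp he_i\notin B$ then $z\in N(z\mp he_i)\cap B$ while $z\mp he_i\in N(z\mp he_i)\setminus B$, so $z\mp he_i\in\partial B\subset\overline{B}$, and again $f$ is defined. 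Consequently $f$ is legitimately evaluated at every point actually used, and one may extend $f$ by zero to all of $\mathbb{Z}_h^2$ purely to justify the reindexing: the extended values only ever multiply coefficients that vanish, so they alter neither side. This settles the reduced identity and hence the theorem.
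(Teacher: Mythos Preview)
Your proof is correct and follows essentially the same approach as the paper: cancel the radicals to obtain $n_i^{\pm}s=-h^2\partial_i^{\pm,h}\chi_B$ (the paper packages this as Lemma~\ref{lem: StokeEq1}), extend the sum to all of $\mathbb{Z}_h^2$, apply discrete summation by parts, and use a zero extension of $f$ to justify the reindexing. Your treatment of the domain issue---verifying that every value of $f$ actually invoked lies in $\overline{B}$---is in fact more explicit than the paper's, which simply asserts $\int_B\partial_i^{\mp,h}g\,dV^h=\int_B\partial_i^{\mp,h}f\,dV^h$ without spelling out why $z\mp he_i\in\overline{B}$ for $z\in B$.
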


In the discrete setting, we shall always extend $S$, $s$ , and $n_i^{\pm}$ to the whole lattice  $\mathbb{Z}_h^2$. That is, we identify  the discrete boundary measure $S$   with $S\circ\varsigma^{-1}$ via the natural embedding map $\varsigma:\partial B \longrightarrow \mathbb{Z}_h^2$
and identify the density function $s$ and the discrete normal vectors $n_i^{\pm}$ with their zero extensions respectively.
Notice that the density function of   $S\circ\varsigma^{-1}$ is exactly the zero extension of $s$.

We shall state our results in the language of distributions. To this end, we define the space of discrete test functions by $$\mathcal D(\mathbb{Z}_h^2):=\{f:\mathbb{Z}_h^2\longrightarrow\mathbb{R}|\textbf{supp}f\text{ is bounded}\}$$ and the discrete distribution space by $$\mathcal D^{*}(\mathbb{Z}_h^2):=\textrm{Hom}(\mathcal  D(\mathbb{Z}_h^2),\mathbb{R}).$$
It is well-known that $\mathcal  D^{*}(\mathbb{Z}_h^2)$ contains every function $f:\mathbb{Z}_h^2\longrightarrow \mathbb{R}$ regarded as a discrete distribution
$$\Lambda_f(g):=\int_{\mathbb{Z}_h^2}fgdV^h, \qquad \forall\ g \in \mathcal  D(\mathbb{Z}_h^2). $$
Furthermore, it also contains every locally finite
 measure $\mu$ on the topological group $\mathbb{Z}_h^2$  since it can be identified with a discrete distribution
$$\Lambda_{\mu}(g):=\int_{\mathbb{Z}_h^2}gd\mu, \qquad \forall\ g \in \mathcal  D(\mathbb{Z}_h^2). $$

\begin{lemma}[Discrete Stokes equations]\label{lem: StokeEq1}
Let $B$ be a subset of $\mathbb{Z}_h^2$, $S$  the surface measure on $\partial B$, and $\vec{n}=(n_1^+,n_1^-,  n_2^+, n_2^-)$  the normal vector on $\partial B$.
Then in the sense of distribution we have
\begin{eqnarray}\label{StokeEq1}
   -\partial_i^{\pm,h}\chi_B&=&n_i^{\pm}S, \qquad (i=1,2)\\
  4\chi_{\partial B}&=&\sum\limits_{i=1,2}(n_i^+)^2 +  (n_i^-)^2. \label{StokeEq2}
\end{eqnarray}
\end{lemma}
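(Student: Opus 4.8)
The plan is to establish both distributional identities by testing against an arbitrary $g \in \mathcal D(\mathbb{Z}_h^2)$ and reducing each side to a finite sum over the lattice. For the first identity \eqref{StokeEq1}, I would first recall that the zero-extended normal component $n_i^{\pm}$ and the density $s$ satisfy $n_i^{\pm} s = -\tfrac{h^2}{2}\,\partial_i^{\pm,h}\chi_B$ pointwise on $\mathbb{Z}_h^2$: indeed, from Definitions \ref{EDF} and \ref{ENV} the radical in the denominator of $n_i^{\pm}$ is exactly $\tfrac{2}{h^2}$ times the radical appearing in $s$, so the square roots cancel and one is left with $-\tfrac{h^2}{2}\,\partial_i^{\pm,h}\chi_B$. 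This cancellation is clean on $\partial B$; away from $\partial B$ both sides vanish by the Remark following Definition \ref{EDF}, so the pointwise identity holds on all of $\mathbb{Z}_h^2$.

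With this pointwise relation in hand, the distributional statement $-\partial_i^{\pm,h}\chi_B = n_i^{\pm} S$ becomes a matter of unwinding the two notions of distribution introduced in the excerpt. On the left, $-\partial_i^{\pm,h}\chi_B$ is a lattice function viewed via $\Lambda_{(\cdot)}$, so $\Lambda_{-\partial_i^{\pm,h}\chi_B}(g) = -\int_{\mathbb{Z}_h^2}(\partial_i^{\pm,h}\chi_B)\,g\,dV^h$. On the right, $n_i^{\pm} S$ is the measure $S$ weighted by the function $n_i^{\pm}$, whose action on $g$ is $\int_{\mathbb{Z}_h^2} n_i^{\pm}\,g\,dS = \sum_{z} n_i^{\pm}(z)\,g(z)\,s(z)$. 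Substituting $n_i^{\pm}(z)s(z) = -\tfrac{h^2}{2}\,\partial_i^{\pm,h}\chi_B(z)$ turns this sum into $-\sum_z \tfrac{h^2}{2}\,(\partial_i^{\pm,h}\chi_B)(z)\,g(z)$, which equals $-\int_{\mathbb{Z}_h^2}(\partial_i^{\pm,h}\chi_B)\,g\,dV^h$ by the definition of $V^h$, matching the left side. I expect the main subtlety here to be bookkeeping rather than deep: one must be careful that the factor $h^2$ from the Haar measure in $\int dV^h$ is exactly the one supplied by $s$, and that the zero extensions make every sum finite so the manipulations are legitimate.

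For the second identity \eqref{StokeEq2}, the approach is purely pointwise and then trivially distributional. Squaring the definition of $n_i^{\pm}$ gives $(n_i^{\pm})^2 = 4(\partial_i^{\pm,h}\chi_B)^2 / R$, where $R := \sum_{i=1,2}\big((\partial_i^{+,h}\chi_B)^2 + (\partial_i^{-,h}\chi_B)^2\big)$ is the common radicand. Summing over $i=1,2$ and over the two signs collapses the numerator to $4R$, so $\sum_{i}\big((n_i^+)^2+(n_i^-)^2\big) = 4R/R = 4$ at every point where $R \neq 0$, i.e.\ precisely on $\partial B$; at points off $\partial B$ all four $n_i^{\pm}$ vanish and the sum is $0$. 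Hence the sum equals $4\chi_{\partial B}$ pointwise, and since both sides are ordinary lattice functions the distributional identity follows immediately by applying $\Lambda_{(\cdot)}$.

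The only genuine obstacle I anticipate is verifying that the zero-extension conventions are mutually consistent across the two distinct embeddings of objects into $\mathcal D^*(\mathbb{Z}_h^2)$ — namely that the identification of $S$ with $S\circ\varsigma^{-1}$ and the zero extensions of $s$ and $n_i^{\pm}$ described after Theorem \ref{Stoke} really do make $n_i^{\pm}S$ coincide, as a distribution, with $\Lambda_{-\partial_i^{\pm,h}\chi_B}$ rather than merely agreeing on $\partial B$. Once one checks that $R=0$ forces both $\partial_i^{\pm,h}\chi_B = 0$ and $n_i^{\pm}=0$ at the same points, the two sides agree everywhere and no boundary term is lost, so the argument closes.
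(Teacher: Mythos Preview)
Your approach is essentially the paper's own: verify \eqref{StokeEq2} pointwise by squaring and summing, and for \eqref{StokeEq1} use that the square roots in Definitions~\ref{EDF} and~\ref{ENV} cancel so that $n_i^{\pm}s$ is a constant multiple of $\partial_i^{\pm,h}\chi_B$, then translate to the distributional pairing. The one slip is the constant: since $n_i^{\pm}$ carries a factor $-2$ in its numerator and $s$ a factor $h^2/2$, the product is $n_i^{\pm}s = -h^2\,\partial_i^{\pm,h}\chi_B$ (not $-h^2/2$), which is exactly what you need so that $\sum_z n_i^{\pm}(z)s(z)g(z) = -\int_{\mathbb{Z}_h^2}(\partial_i^{\pm,h}\chi_B)\,g\,dV^h$.
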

\begin{proof}
First we check the second identity.
By definition, we have $$\left.\sum\limits_{i=1,2}(n_i^+)^2 +  (n_i^-)^2 \right|_{\partial B}\equiv 4.$$ Since $n_i^{\pm}(i=1,2)$ vanish on $\mathbb{Z}_h^2\backslash B$, we have $$\left.\sum\limits_{i=1,2}(n_i^+)^2 +  (n_i^-)^2\right|_{\mathbb{Z}_h^2\backslash B} \equiv 0.$$
Thus (\ref{StokeEq2}) holds true.

Next we check identity (\ref{StokeEq1}). In $\mathcal D^{*}(\mathbb{Z}_h^2)$, we have  \begin{equation}\label{Eq:S-s}S=h^{-2}s,\end{equation} since $s$ is the density function of $S$. Indeed, for any $f\in D(\mathbb(Z)_h^2)$ we have
\begin{gather*}\begin{split}
         \langle S,f\rangle=&\int_{\mathbb{Z}_h^2}fd(S\circ\varsigma^{-1})
              =\int_{\partial B}fdS
              =\sum_{z \in \partial B}f(z)s(z)\\
              =&\sum_{z \in \mathbb{Z}_h^2}f(z)s(z)\qquad \text{(zero extension)}\\
              =&\int_{\mathbb{Z}_h^2}h^{-2}sfdV^h\\
              =&\langle h^{-2}s,f\rangle
\end{split}
\end{gather*}
as desired.
So we need to show that
$$-\partial_i^{\pm,h}\chi_B=h^{-2}n_i^{\pm}s. $$

For any $x\in \partial B$,  it follows from Definitions \ref{EDF} and \ref{ENV} that
$$\begin{array}{lll}n_i^{\pm}(x)s(x)&=&
                                \displaystyle{\frac{-2\partial_i^{\pm,h}\chi_B}{\sqrt{\sum\limits_{i=1,2}(\partial_i^{+,h}\chi_B)^{ 2}+(\partial_i^{-,h}\chi_B)^{ 2}}}\frac{h^2}{2}\sqrt{\sum_{i=1,2}(\partial_i^{+,h}\chi_B)^{ 2}+(\partial_i^{-,h}\chi_B)^{ 2}}}\\
                                &=&
                                -h^2\partial_i^{\pm,h}\chi_B. \end{array}$$
On the other hand, for any $x \notin \partial B$ we have $$s(x)=\partial_i^{\pm,h}\chi_B(x)=0.$$
Hence $$-\partial_i^{\pm,h}\chi_B=h^{-2}n_i^{\pm}s. $$ This completes the proof.
\end{proof}

\begin{remark}
In the continuous setting the Green theorem $$\displaystyle{\int_{\partial B}fn_idS =\int_B \frac{\partial f}{\partial x_i}dV}$$ can be restated  in term of distribution  as
\begin{eqnarray}\label{dis-StokeEq1} n_iS=-\frac{\partial}{\partial x_i}\chi_B, \quad (i=1,2).\end{eqnarray}
Here $n_i$ is identified with  its zero extension from $\partial B$ to $\mathbb{R}^2$ and the surface measure $S$ is identified with its push-out $S\circ\varsigma^{-1}$ via the classical  embedding map $\varsigma$ from $\partial B$ to $\mathbb{R}^2$.
Moreover, in this point of view we have
\begin{eqnarray}\label{dis-StokeEq2}\sum_{i=1}^{2}n_i^2=\chi_{\partial B}.
\end{eqnarray}

The system of discrete Stokes equations in Lemma \ref{StokeEq1} is a  variant of  its continuous counterparts in (\ref{dis-StokeEq1}) and  (\ref{dis-StokeEq2}).
However,  the discrete normal vectors is likely to diverge under scaling limits.

We point out that the discrete surface measure $S$ and the discrete normal vector $\vec{n}$ are uniquely determined by the system of Stokes equations in Lemma \ref{lem: StokeEq1}. Its proof is the same as  in the continuous version as we shall see in  the next lemma.
\end{remark}

\begin{lemma}\label{lem: StokeEq4}
Let $B$ be a  domain in $\mathbb{C}$ with smooth boundary $\partial B$.
   Then the system of the Stokes equations
\begin{eqnarray}\label{StokeEq8}
  m_iT&=&-\frac{\partial}{\partial x_i}\chi_B, \quad (i=1,2)\\
 \sum_{i=1}^{2}m_i^2&=&\chi_{\partial B}, \label{StokeEq9}
\end{eqnarray}
with  $T$ being a   non-negative regular Borel measure  on $\partial B$, and $\vec{m}=(m_1,  m_2)$ smooth on $\partial B$, has  a unique  solution $\{T, \vec{m}\}=\{S, \vec{n}\}$,
where $S$   is the boundary  measure and $\vec{n}$   the outer normal vector on $\partial B$.
 \end{lemma}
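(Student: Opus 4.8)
The plan is to split the assertion into existence and uniqueness. Existence is immediate from the classical divergence theorem: as recalled in the preceding remark, Green's theorem $\int_{\partial B} f n_i\,dS = \int_B (\partial f/\partial x_i)\,dV$ is precisely the distributional identity $n_i S = -\frac{\partial}{\partial x_i}\chi_B$, which is (\ref{StokeEq8}) for the pair $\{S,\vec n\}$; and since $\vec n$ is the unit outer normal, $\sum_i n_i^2 \equiv 1$ on $\partial B$, i.e. $\sum_i n_i^2 = \chi_{\partial B}$, which is (\ref{StokeEq9}). Hence $\{S,\vec n\}$ is a solution, and the whole content of the lemma is that it is the only one.

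For uniqueness, let $\{T,\vec m\}$ be any admissible solution. Applying (\ref{StokeEq8}) to both $\{T,\vec m\}$ and $\{S,\vec n\}$ shows that $m_i T$ and $n_i S$ both equal $-\frac{\partial}{\partial x_i}\chi_B$, giving the measure identity
$$m_i\,T = n_i\,S, \qquad i=1,2,$$
so the vector-valued measure $\vec m\,T$ coincides with $\vec n\,S$. The first step is to show $T$ is absolutely continuous with respect to $S$. I would use the Lebesgue decomposition $T = \rho\,S + T_{s}$ with $\rho \ge 0$ and $T_s \perp S$; matching the singular parts in $m_i T = n_i S$ (whose right side is absolutely continuous with respect to $S$) forces $m_i\,T_s = 0$ for each $i$. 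If $T_s$ were nontrivial this would give $m_i = 0$ for $T_s$-almost every point, hence $\sum_i m_i^2 = 0$ on a set of positive $T_s$-measure inside $\partial B$, contradicting the normalization $\sum_i m_i^2 = \chi_{\partial B} = 1$ of (\ref{StokeEq9}). Therefore $T_s = 0$ and $T = \rho\,S$.

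With $T = \rho\,S$, the identity $m_i T = n_i S$ becomes $m_i\,\rho = n_i$ for $S$-almost every point and each $i$. Squaring and summing over $i$, and invoking (\ref{StokeEq9}) for both pairs (so that $\sum_i m_i^2 = 1$ and $\sum_i n_i^2 = 1$ on $\partial B$), yields $\rho^2 = 1$, whence $\rho = 1$ because $\rho \ge 0$. Thus $T = S$, and consequently $m_i = n_i$ holds $S$-almost everywhere. Finally, since both $\vec m$ and the outer normal $\vec n$ are smooth on $\partial B$ and $S$ has full support there, the almost-everywhere equality upgrades to $\vec m = \vec n$ everywhere on $\partial B$, completing the proof.

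I expect the main obstacle to be the measure-theoretic bookkeeping in the absolute-continuity step: one must carefully separate the absolutely continuous and singular parts of $m_i T = n_i S$, confirm that the normalization (\ref{StokeEq9}) is genuinely available $T_s$-almost everywhere (this is exactly where the hypothesis that $\vec m$ is defined and satisfies (\ref{StokeEq9}) on all of $\partial B$ is used), and ensure $\rho \ge 0$ so that the square root is unambiguous. The passage from $S$-almost-everywhere equality to pointwise equality of the normal fields is routine given smoothness, but should be stated explicitly so that the conclusion $\{T,\vec m\} = \{S,\vec n\}$ is literal rather than merely almost everywhere.
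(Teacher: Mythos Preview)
Your proof is correct, but it follows a genuinely different route from the paper's. The paper never invokes the Lebesgue decomposition; instead it argues by a symmetric inequality: for any nonnegative test function $f$, the normalization $\sum_i m_i^2 = 1$ gives $\int_{\partial B} f\,dT = \sum_i \int_{\partial B} f m_i^2\,dT$, and substituting $m_i T = n_i S$ turns this into $\sum_i \int_{\partial B} f m_i n_i\,dS \le \int_{\partial B} f\,dS$ by the pointwise Cauchy--Schwarz bound $\vec m\cdot\vec n\le 1$. The roles of $(T,\vec m)$ and $(S,\vec n)$ are symmetric, so the reverse inequality also holds, whence $T=S$; the identification $\vec m=\vec n$ then follows exactly as in your last step. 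Your approach trades this inequality-and-symmetry trick for a direct structural argument: Lebesgue-decompose $T$, use the normalization to kill the singular part, and solve $\rho^2=1$ algebraically. The paper's argument is slightly more elementary in that it avoids Radon--Nikodym machinery altogether, while yours is more systematic and makes transparent exactly where each hypothesis (nonnegativity of $T$, the pointwise unit-length condition on $\vec m$) enters.
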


\begin{proof}
We only need to prove the uniqueness since $\{S, \vec{n}\}$ is clearly a solution.

Assume $\{T,\vec{m}\}$ is another solution.
For any non-negative $f \in C_0^{\infty}(\mathbb{C})$,
$$\int_{\partial B} fdT=\int_{\partial B} f m_1^2dT+\int_{\partial B} f m_2^2dT.$$

Since $\{S,\vec{n}\}$ is also a solution,  then we have $n_iS=m_iT\quad(i=1,2)$. It follows that
\begin{eqnarray*}\int_{\partial B} fdT&=&\int_{\partial B} f m_1^2dT+\int_{\partial B}f m_2^2dT\\
                                    &=&\int_{\partial B} f m_1n_1dS+\int_{\partial B} f m_2n_2dS,
\end{eqnarray*}
which implies that $$\int_{\partial B} fdT\leqslant \int_{\partial B} fdS$$ since $\vec{m}$ and $\vec{n}$ are both unit vector and $f$ is non-negative.

By symmetricity, we have    $$\int_{\partial B} fdS\leqslant \int_{\partial B} fdT.$$
This implies  $T=S$ since the set of finite combinations of non-negative smooth functions with compact support is dense in $C_c(\partial B)$.

Since $n_iS=m_iT\quad(i=1,2)$ and $S=T$, we have $n_i=m_i$ a.e.$S$ on $\partial B$. According to the assumption that $\vec{m}$ and $\vec{n}$ are both smooth, we obtain that $$\vec{m}\equiv \vec{n}. $$
\end{proof}

Now we are in position to   prove our main result in this section.

\begin{pot3}
Put $$g=f\chi_{\overline{B}}. $$
Since $\overline{B}$ is bounded,    we have $g \in \mathcal{D}(\mathbb{Z}_h^2)$
so that
 (\ref{StokeEq1}) implies
\begin{eqnarray*}
\langle n_i^{\pm}S,g\rangle&=&\langle-\partial_i^{\pm,h}\chi_B, g\rangle\\
              &=&-\int_{\mathbb{Z}_h^2}\partial_i^{\pm,h}\chi_B(x)g(x)dV^h(x).
\end{eqnarray*}
It is easy to verify that $$-\int_{\mathbb{Z}_h^2}\partial_i^{\pm,h}\chi_B(x)g(x)dV^h(x)
=\int_{\mathbb{Z}_h^2}\chi_B(x)\partial_i^{\mp,h}g(x)dV^h(x).$$
We thus  have
 \begin{equation}\label{Pr.Eq}
              \langle n_i^{\pm}S,g\rangle=\int_{B}\partial_i^{\mp,h}gdV^h=\int_{B}\partial_i^{\mp,h}fdV^h.
 \end{equation}

On the other hand, it follows from identity (\ref{Eq:S-s})  that
\begin{eqnarray*}
\langle n_i^{\pm}S,g\rangle&=&\langle h^{-2}n_i^{\pm}s, g\rangle=\int_{\mathbb{Z}_h^2}h^{-2}n_i^{\pm}sgdV^h=\sum_{z\in \mathbb{Z}_h^2}n_i^{\pm}(z)s(z)g(z)\\
&=&\sum_{z\in \partial B}n_i^{\pm}(z)s(z)g(z)\qquad \text{(zero extension)}\\
&=&\int_{\partial B}gn_i^{\pm}dS.
\end{eqnarray*}
Since $f=g$ on $\partial B$, we thus obtain
\begin{eqnarray*}
\langle n_i^{\pm}S,g\rangle
              &=&\int_{\partial B}fn_i^{\pm}dS,
\end{eqnarray*}
This together with  (\ref{Pr.Eq})  leads to the Green theorem.
\end{pot3}

\section{Discrete  Cauchy-Pompeiu integral formula}\label{S5}

In this section we introduce the  discrete Bochner-Matinelli   kernel  and establish the discrete  Cauchy-Pompeiu integral formula.

The fundamental solution \cite{R}
of operator $\partial_{\bar z}^h$ is defined by $$E^h(x,y)=\frac{1}{h}E\Big(\frac{x}{h},\frac{y}{h}\Big), $$
where
\begin{equation}\label{Fund-Sol}E(x,y)=\frac{1}{4\pi^2}\int_{[-\pi,\pi]^2}\frac{2}{i\sin{u}-\sin{v}}e^{i(ux+vy)}dudv. \end{equation}
Let $\delta_0^h$ be the discrete Dirac delta function on $\mathbb{Z}_h^2$, i.e.,
\begin{equation*}
\delta_0^h(z)=\left\{\begin{array}{lll}
                        h^{-2},&&\quad z=0,\\
                        0,&&\quad z \neq 0.
                       \end{array}
                 \right.
\end{equation*}

\begin{theorem}\cite{R}Function $E^h$ is the fundamental solution of $\partial_{\bar z}^h$, i.e.,
$$\partial_{\bar z}^hE^h=\delta_{0}^h \qquad\text{in}\ \ \mathbb{Z}_h^2. $$
\end{theorem}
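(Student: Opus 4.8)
The plan is to prove the identity by a Fourier-multiplier computation, treating the array $\{E(m,n)\}_{(m,n)\in\mathbb Z^2}$ as the family of Fourier coefficients on the torus $[-\pi,\pi]^2$ of the symbol $\Phi(u,v):=\frac{2}{i\sin u-\sin v}$. I would first settle the normalized case $h=1$ and then recover the general statement by dilation. Before anything else one must check that $E(m,n)$ is well defined: the symbol $\Phi$ blows up exactly where $\sin u=\sin v=0$, but near each such zero one has $|i\sin u-\sin v|=\sqrt{\sin^2u+\sin^2v}$, which grows linearly in the distance $r$ to the zero, so $|\Phi|=O(1/r)$ and $\Phi$ is absolutely integrable on $[-\pi,\pi]^2$ (in two dimensions $\int r^{-1}\,r\,dr\,d\theta$ converges). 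Hence every $E(m,n)$ is a genuine finite complex number.

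The main step is to read off the Fourier multipliers of the difference operators. Shifting $m\mapsto m+1$ multiplies the mode $e^{ium}$ by $e^{iu}$, so the symmetric difference $\partial_1^1$ acts in the $u$-variable as multiplication by $\tfrac12(e^{iu}-e^{-iu})=i\sin u$, and likewise $\partial_2^1$ acts as multiplication by $i\sin v$; here I only use that each operator is a finite linear combination of shifts, so passing it under the integral is mere linearity. Consequently $\partial_{\bar z}^1=\tfrac12(\partial_1^1+i\partial_2^1)$ carries the multiplier $\tfrac12\bigl(i\sin u-\sin v\bigr)$, which is exactly the reciprocal of $\Phi$. The crucial cancellation is therefore $\tfrac12(i\sin u-\sin v)\cdot\frac{2}{i\sin u-\sin v}\equiv 1$, whence $\partial_{\bar z}^1E(m,n)=\frac{1}{4\pi^2}\int_{[-\pi,\pi]^2}e^{i(um+vn)}\,du\,dv=\delta_{m,0}\,\delta_{n,0}$, which is precisely the unit-lattice delta $\delta_0^1$.

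I would then transport this to $\mathbb Z_h^2$ by scaling. From $E^h(x)=h^{-1}E(x/h)$ and the elementary homogeneity of the symmetric difference, $\partial_i^hE^h(x)=\frac{E^h(x+he_i)-E^h(x-he_i)}{2h}=h^{-2}(\partial_i^1E)(x/h)$, it follows that $\partial_{\bar z}^hE^h(x)=h^{-2}(\partial_{\bar z}^1E)(x/h)=h^{-2}\delta_0^1(x/h)$, and the right-hand side equals $h^{-2}$ when $x=0$ and $0$ otherwise, which is exactly $\delta_0^h$. Alternatively one may work directly on $\mathbb Z_h^2$: the dual torus is $[-\pi/h,\pi/h]^2$, the multiplier of $\partial_{\bar z}^h$ becomes $\tfrac{1}{2h}(i\sin(h\xi)-\sin(h\eta))$, and the same cancellation leaves the integrand $h^{-1}$, reproducing $\delta_0^h$ after integrating the exponential modes.

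The only genuine obstacle is the convergence bookkeeping around the singular set of $\Phi$, and I would emphasize that it never interferes with the main computation. The individual operators $\partial_i^1$ have bounded multipliers, since $|i\sin u|,|i\sin v|\le\sqrt{\sin^2u+\sin^2v}=|i\sin u-\sin v|$, so $\partial_1^1E$ and $\partial_2^1E$ are already integrals of bounded functions; and after forming $\partial_{\bar z}^1$ the singularity is removed outright, the residual integrand being the constant $1$. Thus no principal-value interpretation is needed and every interchange of summation and integration is justified by absolute convergence.
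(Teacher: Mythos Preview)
Your proof is correct and follows essentially the same approach as the paper: both compute the Fourier multiplier of $\partial_{\bar z}^h$ acting on the exponential $e^{i(ux+vy)}$, observe that it exactly cancels the symbol $\frac{2}{i\sin u-\sin v}$, and recognize the remaining integral $\frac{1}{4\pi^2}\int_{[-\pi,\pi]^2}e^{i(ux+vy)}\,du\,dv$ as the discrete delta. Your version is more careful than the paper's (you verify integrability of $\Phi$, justify passing the difference operator under the integral, and spell out the scaling from $h=1$ to general $h$), but the underlying argument is the same.
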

\begin{proof}
Without loss of generality, we can assume $h=1$.
By direct calculation,
$$\partial_{\bar z}^h(e^{i(ux+vy)})=\frac{i\sin u-\sin v}{2}e^{i(ux+vy)}.$$
Applying $\partial_{\bar z}^h$ on both sides of (\ref{Fund-Sol}), we thus obtain
$$\partial_{\bar z}^hE^h=\frac{1}{4\pi^2}\int_{[-\pi,\pi]^2}e^{i(ux+vy)}dudv=\delta_0^h.$$
\end{proof}

Let $\mathcal{K}^h$  be  the discrete Bochner-Matinelli   kernel, defined by
 $$\mathcal{K}^h(z,\zeta)= A(z-\zeta)n_1^{-}(z)+ B(z-\zeta) n_1^{+}(z)+ iC(z-\zeta)n_2^{-}(z)+iD(z-\zeta)n_2^{+}(z),$$
where
\begin{eqnarray*}
A(z)&=&
-4^{-1}E^h(h-z),
\\
B(z)&=&-4^{-1}E^h(-h-z),
\\
C(z)&=&-4^{-1}E^h(ih-z),
\\
D(z)&=&-4^{-1}E^h(-ih-z).
\end{eqnarray*}

\begin{theorem}[Cauchy-Pompeiu]\label{MCIEq11}Let $B$ be a bounded subset of $\mathbb{Z}_h^2$. Then for any function $f: B\longrightarrow \mathbb{C}$ we have
\begin{eqnarray}\label{eq:BM}\chi_B(\zeta) f(\zeta)=\displaystyle{\int_{\partial B}} \mathcal{K}^h(z,\zeta) f(z)dS(z)+\displaystyle{\int_{ B}} E^h(\zeta-z) \partial_{\bar z}^h f (z)dV^h(z).\end{eqnarray}
\end{theorem}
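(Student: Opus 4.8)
The plan is to derive the Cauchy--Pompeiu formula from the two ingredients already in hand: the discrete Green formula (Theorem \ref{Stoke}) and the fundamental-solution identity $\partial_{\bar z}^h E^h = \delta_0^h$. The strategy mirrors the classical continuous derivation, where one integrates the product $E^h(\zeta-z)f(z)$ against $\partial_{\bar z}^h$ and uses a product (Leibniz) rule to split the result into a boundary term and a bulk term. First I would fix $\zeta$ and work with the function $z \mapsto E^h(\zeta-z)f(z)$ on $\overline{B}$, and assemble the combination $\partial_{\bar z}^h = \tfrac12(\partial_1^h + i\partial_2^h)$ out of the real Green formulas $\int_{\partial B} f n_i^{\pm}\, dS = \int_B \partial_i^{\mp,h} f\, dV^h$. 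Applying Green's formula componentwise to $E^h(\zeta - z)f(z)$ and summing with the coefficients $\tfrac12$ and $\tfrac{i}{2}$ should produce, on the boundary side, exactly the kernel $\mathcal{K}^h$: the averaging structure of $\partial_i^h = \tfrac12(\partial_i^{+,h}+\partial_i^{-,h})$ is what generates the four separate pieces $A,B,C,D$ evaluated at the four shifts $\pm h, \pm ih$, each paired with the matching normal component $n_i^{\pm}$.

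The key computation is a discrete summation-by-parts (the Leibniz rule for the symmetric difference operator), which I expect to take the schematic form $\partial_{\bar z}^h\big(E^h(\zeta-\cdot)\,f\big) = \big(\partial_{\bar z}^h E^h(\zeta-\cdot)\big)\, \tau f + E^h(\zeta-\cdot)\,\partial_{\bar z}^h f + (\text{cross terms})$, where $\tau$ denotes appropriate shifts. The point of the shifted arguments $h-z, -h-z, ih-z, -ih-z$ in the definitions of $A,B,C,D$ is precisely to absorb these shifts so that the boundary integrand collapses to $\mathcal{K}^h(z,\zeta) f(z)$. On the bulk side, the term carrying $\partial_{\bar z}^h E^h(\zeta - z) = \delta_0^h(\zeta - z)$ sifts out the value $h^{-2}\cdot h^2 f(\zeta) = f(\zeta)$ when $\zeta \in B$ and yields zero otherwise, which accounts for the factor $\chi_B(\zeta)$ on the left-hand side; the remaining bulk term is $\int_B E^h(\zeta - z)\,\partial_{\bar z}^h f(z)\, dV^h(z)$.

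Concretely, I would proceed as follows. Fix $\zeta$ and set $g(z) = E^h(\zeta - z) f(z)$; since $B$ is bounded, $g$ has bounded support so Green's formula applies. Writing out $\partial_{\bar z}^h g = \tfrac12 \sum_{i=1,2} \alpha_i\, \tfrac12(\partial_i^{+,h} + \partial_i^{-,h}) g$ with $\alpha_1 = 1,\ \alpha_2 = i$, I would apply Theorem \ref{Stoke} to each $\partial_i^{\pm,h} g$ to move the difference onto $\chi_B$ via the normal vectors, then re-expand each $\partial_i^{\mp,h}$ acting on the product $E^h(\zeta-z)f(z)$ by the discrete Leibniz identity. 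Collecting the terms in which the difference lands on $E^h$ gives the delta contribution $\delta_0^h(\zeta-z)$ after the argument shift, producing $\chi_B(\zeta)f(\zeta)$; collecting the terms in which the difference lands on $f$ gives the bulk integral with $\partial_{\bar z}^h f$; and the boundary contributions reorganize into $\int_{\partial B}\mathcal{K}^h(z,\zeta)f(z)\,dS(z)$ once the shifts are matched against $A,B,C,D$.

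The main obstacle I anticipate is bookkeeping rather than conceptual difficulty: the symmetric difference operator does not satisfy the naive Leibniz rule $\partial^h(fg) = (\partial^h f)g + f(\partial^h g)$, so the product rule for $\partial_i^{\pm,h}$ carries shift operators (e.g. $\partial_i^{+,h}(fg)(x) = (\partial_i^{+,h} f)(x)\, g(x+he_i) + f(x)\,(\partial_i^{+,h} g)(x)$), and these shifts must be tracked carefully so that the evaluation points $\zeta - z \pm h$, $\zeta - z \pm ih$ emerge with exactly the right signs and the four terms reassemble into the stated kernel. Verifying that the coefficients $-4^{-1}$ and the pairing of $n_1^{\mp}$ with $A,B$ and $n_2^{\mp}$ with $C,D$ come out correctly is the delicate step; once the Leibniz shifts are organized consistently, the delta-sifting and the identification of the boundary integrand should follow mechanically.
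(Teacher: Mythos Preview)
Your plan uses the correct ingredients---Green's formula, the discrete Leibniz rule, and the fundamental-solution identity---and is in spirit the same as the paper's proof. But the specific starting point $g(z)=E^h(\zeta-z)f(z)$ does not lead to the stated formula. If you apply Green's formula to $\int_B \partial_i^{\mp,h}g\,dV^h$, the boundary term is $\int_{\partial B} E^h(\zeta-z)f(z)\,n_i^{\pm}\,dS$ with the \emph{unshifted} $E^h$; no shifts arise there, so these terms do not reorganize into $\mathcal{K}^h$ (which pairs $n_1^{-}$ with $A(z-\zeta)=-\tfrac14 E^h(\zeta-z+h)$, etc.). Conversely, expanding $\partial_i^{\mp,h}g$ by Leibniz puts the shifts into the \emph{bulk}: you get $E^h(\zeta-z\pm h)\,\partial_i^{\pm,h}f$ rather than the clean $E^h(\zeta-z)\,\partial_{\bar z}^h f$. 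Either way the shifts land in the wrong place, and the identity you obtain is not the one stated.

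The paper runs the computation in the opposite direction, and this is precisely where the design of $A,B,C,D$ pays off. It starts from $\int_{\partial B}\mathcal{K}^h(z,\zeta)f(z)\,dS$, splits it into the four pieces $I_1,\dots,I_4$, and applies Green to each, e.g.\ $I_1=\int_{\partial B}A(z-\zeta)f(z)n_1^{-}\,dS = \int_B \partial_1^{+,h}[A(\cdot-\zeta)f]\,dV^h$. The Leibniz rule $\partial_1^{+,h}(uv)=(\tau_x u)\,\partial_1^{+,h}v + (\partial_1^{+,h}u)\,v$ then produces $\tau_x A(z-\zeta)\,\partial_1^{+,h}f(z)$, and since $\tau_x A = \tau_x^{-1}B = \tau_y C = \tau_y^{-1}D = -\tfrac14\rho E^h$ by construction, the Leibniz shift cancels the built-in shift and leaves the common factor $-\tfrac14 E^h(\zeta-z)$ on all four $\partial_i^{\pm,h}f$, which reassemble into $-\int_B E^h(\zeta-z)\,\partial_{\bar z}^h f\,dV^h$. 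The remaining terms sum via $\partial_1^{+,h}A+\partial_1^{-,h}B+i\partial_2^{+,h}C+i\partial_2^{-,h}D = \rho(\partial_{\bar z}^h E^h)=\delta_0^h$ to give $\chi_B(\zeta)f(\zeta)$. If you prefer to start from the bulk side, the fix is to recognize $E^h(\zeta-z)=\tau_x[-4A(\cdot-\zeta)](z)$ (and analogously for the other three pieces) \emph{before} invoking Leibniz in reverse, so that Green then yields boundary terms carrying $A,B,C,D$ rather than the unshifted $E^h$.
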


\begin{proof} We first split the first summand in the right side of (\ref{eq:BM}) into four parts:
\begin{gather}\label{EqMCI}\begin{split}
 \displaystyle{\int_{\partial B}}\mathcal{K}^h(z,\zeta)f(z) dS(z) =&\displaystyle{\int_{\partial B}} A(z-\zeta) f(z)n_1^{-}(z)dS(z)+\\
                                          &\displaystyle{\int_{\partial B}} B(z-\zeta)  f(z)n_1^{+}(z)dS(z)+\\
                                          &i\displaystyle{\int_{\partial B}} C(z-\zeta)  f(z)n_2^{-}(z)dS(z)+\\
                                          &i\displaystyle{\int_{\partial B}} D(z-\zeta)  f(z)n_2^{+}(z)dS(z)\\
                                          =&I_1+I_2+I_3+I_4.
\end{split}
\end{gather}

By definition, we have
 $$A=-4^{-1}\tau_x^{-1}\rho E^h, \quad B=-4^{-1}\tau_x\rho E^h, \quad C=-4^{-1}\tau_y^{-1}\rho E^h, \quad D=-4^{-1}\tau_y\rho E^h,$$
where the operator $\rho$ is the reflection
$$\rho f(z)=f(-z),$$ and  $\tau_x, \tau_y$ are translations  $$\tau_xf(z)=f(z+h), \qquad \tau_yf(z)=f(z+hi).$$

Applying Theorem \ref{Stoke} to $I_1$, we have
\begin{equation*}
\begin{split}
I_1=&\displaystyle{\int_{ B}}\partial_1^{+,h}[ A(\cdot-\zeta) f(\cdot)](z)dV(z)\\
    =&\displaystyle{\int_{ B}} \tau_{x}A(z-\zeta)\partial_1^{+,h}f(z) +\partial_1^{+,h}A(z-\zeta)f(z) dV^h(z).
\end{split}
\end{equation*}
Similarly, we have
\begin{gather*}
 \begin{split}
 I_2
    =&\displaystyle{\int_{ B}} \tau_{x}^{-1}B(z-\zeta)\partial_1^{-,h}f(z) +\partial_1^{-,h}B(z-\zeta)f(z) dV^h(z),\\
 I_3
    =&i\displaystyle{\int_{ B}} \tau_{y}C(z-\zeta)\partial_2^{+,h}f(z)+ \partial_2^{+,h}B(z-\zeta) f(z) dV^h(z),\\
 I_4
    =&i\displaystyle{\int_{ B}}  \tau_{y}^{-1}D(z-\zeta) \partial_2^{-,h}f(z) + \partial_2^{-,h}C(z-\zeta)f(z) dV^h(z).
 \end{split}
\end{gather*}
Substituting the identities above to  (\ref{EqMCI}), we obtain
\begin{gather*}
 \begin{split}
  &\displaystyle{\int_{\partial B}} \mathcal{K}^h(z,\zeta) f(z) dS(z)\\
 =&\displaystyle{\int_{ B}}\big(\tau_{x}A(z-\zeta)\partial_1^{+,h}f(z)
  + \tau_{x}^{-1}B(z-\zeta)\partial_1^{-,h}f(z)+i\tau_{y}C(z-\zeta) \partial_2^{+,h}f(z)\\
  &  +i\tau_{y}^{-1}D(z-\zeta) \partial_2^{-,h}f(z)\big) dV^h(z)+\displaystyle{\int_{ B}}
                                                            \big(\partial_1^{+,h}A(z-\zeta)+ \partial_1^{-,h}B(z-\zeta)\\
  &+ i\partial_2^{+,h}C(z-\zeta) + i\partial_2^{-,h}D(z-\zeta) \big) f(z)dV^h(z).
 \end{split}
\end{gather*}

By direct calculation. we have $$\tau_{x}A  =\tau_x^{-1}B=\tau_{y}C =\tau_{y}^{-1}D=-4^{-1}\rho E$$ and  $$\partial_1^{+,h}A+ \partial_1^{-,h}B+i\partial_2^{+,h}C+ i\partial_2^{-,h}D =\rho (\partial_{\bar z}^hE^h)=\delta_0^h. $$
These  lead to $$\displaystyle{\int_{\partial B}} \mathcal{K}^h(z,\zeta) f(z)dS(z)=-\displaystyle{\int_{ B}} E^h(\zeta-z) \partial_{\bar z}^h f (z)dV^h(z)+\displaystyle{\int_{B}} \delta_0^h(z-\zeta) f(z)dV^h(z) $$ as desired.
\end{proof}

\begin{corollary}\label{MCIEq} If $f$ is discrete  holomorphic on  a bounded subset $B$ of $\mathbb{Z}_h^2$, then
\begin{equation*}
  \chi_B(\zeta)f(\zeta)=\int_{\partial B}\mathcal{K}^h(z,\zeta)f(z)dS(z).
\end{equation*}
\end{corollary}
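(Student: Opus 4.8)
The plan is to obtain this statement as an immediate specialization of the discrete Cauchy--Pompeiu formula proved in Theorem \ref{MCIEq11}, since all of the analytic work has already been carried out there. Note first that the hypothesis places us in the right setting: by Definition \ref{def:dis-hol} a function that is discrete holomorphic on $B$ is defined on the discrete closure $\overline{B}$, so the two-term identity (\ref{eq:BM}) applies to $f$ without any modification.

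First I would simply invoke (\ref{eq:BM}), writing
$$\chi_B(\zeta)f(\zeta)=\int_{\partial B}\mathcal{K}^h(z,\zeta)f(z)\,dS(z)+\int_{B}E^h(\zeta-z)\,\partial_{\bar z}^hf(z)\,dV^h(z).$$
Next I would appeal to the defining property of discrete holomorphicity: Definition \ref{def:dis-hol} says precisely that $\partial_{\bar z}^hf(z)=0$ for every $z\in B$. Since the volume integral in the second term is, by the convention $\int_{B}\,\cdot\,dV^h=\int_{\mathbb{Z}_h^2}(\,\cdot\,)\chi_B\,dV^h$, supported on $B$, its integrand vanishes identically wherever the measure lives, and hence the entire volume term is zero. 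Deleting this vanishing term from the right-hand side leaves exactly
$$\chi_B(\zeta)f(\zeta)=\int_{\partial B}\mathcal{K}^h(z,\zeta)f(z)\,dS(z),$$
which is the asserted formula.

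I do not expect any genuine obstacle here. The substantive ingredients---the construction of the Bochner--Martinelli kernel $\mathcal{K}^h$, the fundamental solution $E^h$ of $\partial_{\bar z}^h$, and the integration-by-parts argument driven by Green's formula (Theorem \ref{Stoke})---are all packaged inside Theorem \ref{MCIEq11}. The only point worth flagging is the bookkeeping observation that the corollary's hypothesis is \emph{exactly} the condition $\partial_{\bar z}^hf\equiv 0$ on $B$ that annihilates the volume term, so the reduction is forced by the definitions and requires no further computation.
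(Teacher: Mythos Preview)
Your proposal is correct and matches the paper's own treatment: the corollary is stated immediately after Theorem \ref{MCIEq11} with no separate proof, precisely because it is the specialization of (\ref{eq:BM}) to the case $\partial_{\bar z}^h f\equiv 0$ on $B$. Your observation that Definition \ref{def:dis-hol} guarantees $f$ is defined on $\overline{B}$, so that (\ref{eq:BM}) applies, is the only bookkeeping point worth recording, and you have recorded it.
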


\begin{remark}
The preceding theorem indicates  a new phenomena that discrete holomorphic functions behave  differently with continuous counterparts  on boundaries since for any discrete holomorphic function
we have
$$
 \int_{\partial B}\mathcal{K}^h(z,\zeta)f(z)dS(z)=\left\{\begin{array}{lll}
                        f(\zeta),&&  \zeta\in \partial^{+} B;\\
                        0,&&  \zeta  \in \partial^{-} B.
                       \end{array}
                 \right.
$$
where $\partial^{\pm} B$ constitute a partition of $\partial B$, defined by
\begin{eqnarray*}
\partial^{+} B&:=&\partial B \cap B,
\\
 \partial^{-} B&:=&\partial B \setminus B.
\end{eqnarray*}
\end{remark}

Finally, we study the holomorphicity of the Bochner-Matinelli kernel. It turns out that  $\mathcal{K}^h(z, \cdot)$ is discrete holomorphic outside the neighbourhood of the diagonal.

\begin{theorem}\label{MCIEq88} For any given $z\in\partial B$,
the discrete Bochner-Matinelli kernel $\mathcal{K}^h(z, \cdot)$ is discrete holomorphic on
$(\mathbb Z_h^2\setminus \partial B)\bigcup (\mathbb Z_h^2\setminus   N(z))$.
\end{theorem}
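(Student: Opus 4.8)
The plan is to fix $z \in \partial B$ and regard $\mathcal{K}^h(z,\cdot)$ purely as a function of $\zeta$, observing that the four scalars $n_1^{\pm}(z), n_2^{\pm}(z)$ are then mere constants. The first thing I would record is that each of the four building blocks, viewed in the $\zeta$ variable, is an honest translate of the fundamental solution with no residual reflection: since $A(w)=-4^{-1}E^h(h-w)$ we get $A(z-\zeta)=-4^{-1}E^h(\zeta-(z-h))$, and likewise $B(z-\zeta)=-4^{-1}E^h(\zeta-(z+h))$, $C(z-\zeta)=-4^{-1}E^h(\zeta-(z-ih))$, and $D(z-\zeta)=-4^{-1}E^h(\zeta-(z+ih))$. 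The double sign change — the reflection baked into $A,B,C,D$ together with the $-\zeta$ — cancels, and this is exactly what makes the kernel amenable to $\partial_{\bar z}^h$ in the $\zeta$ variable.

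Next I would apply $\partial_{\bar z}^h$ in the $\zeta$ variable, writing $\partial_{\bar\zeta}^h$ for brevity. Because the difference operators $\partial_i^{\pm,h}$, and hence $\partial_{\bar z}^h$, are translation invariant, and because $\partial_{\bar z}^h E^h=\delta_0^h$, one obtains $\partial_{\bar\zeta}^h\big[E^h(\zeta-c)\big]=\delta_0^h(\zeta-c)$ for every constant $c$. Summing the four contributions gives
\[\partial_{\bar\zeta}^h\mathcal{K}^h(z,\zeta)=-\tfrac14\Big(n_1^-(z)\,\delta_0^h(\zeta-(z-h))+n_1^+(z)\,\delta_0^h(\zeta-(z+h))+in_2^-(z)\,\delta_0^h(\zeta-(z-ih))+in_2^+(z)\,\delta_0^h(\zeta-(z+ih))\Big).\]
Since $\delta_0^h(\zeta-c)$ is supported at the single point $\zeta=c$, the right-hand side is supported in $\{z\pm h,\, z\pm ih\}=N(z)\setminus\{z\}\subset N(z)$; in particular $\mathcal{K}^h(z,\cdot)$ is already discrete holomorphic on $\mathbb{Z}_h^2\setminus N(z)$.

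The remaining, genuinely content-bearing, step is to confine the four possible singular points further into $\partial B$. The key is the elementary reformulation $(\mathbb{Z}_h^2\setminus\partial B)\cup(\mathbb{Z}_h^2\setminus N(z))=\mathbb{Z}_h^2\setminus(\partial B\cap N(z))$, so it suffices to prove that the support above lies in $\partial B\cap N(z)$. For this I would argue neighbor by neighbor: the coefficient at $z+h$ is proportional to $\partial_1^{+,h}\chi_B(z)=h^{-1}(\chi_B(z+h)-\chi_B(z))$, so if it is nonzero then $z$ and $z+h$ lie on opposite sides of $B$; as both points belong to $N(z+h)$, Definition \ref{def:discrete-boudary} forces $z+h\in\partial B$. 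The same reasoning, run with $\partial_1^{-,h},\partial_2^{+,h},\partial_2^{-,h}$ and the neighborhoods $N(z-h), N(z+ih), N(z-ih)$, handles the other three coefficients. Hence every surviving Dirac mass sits at a point of $\partial B\cap N(z)$, so $\partial_{\bar\zeta}^h\mathcal{K}^h(z,\zeta)=0$ off this set, which is precisely the asserted domain.

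I expect the only subtlety to be this last combinatorial step, where one must use the neighborhood definition of $\partial B$ in the sharp form ``a nonvanishing forward or backward difference of $\chi_B$ at $z$ certifies that the corresponding neighbor is itself a boundary point.'' By contrast, the analytic part — translation invariance together with the Dirac identity for $E^h$ — is routine once the clean rewriting of $A,B,C,D$ as translates of $E^h$ in the $\zeta$ variable is in hand.
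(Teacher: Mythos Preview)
Your proposal is correct and follows essentially the same route as the paper: apply $\partial_{\bar\zeta}^h$ to each of the four translates of $E^h$ to obtain four Dirac masses at the neighbors $z\pm h,\,z\pm ih$, then observe that a nonvanishing coefficient $n_l^{\pm}(z)$ forces the corresponding neighbor into $\partial B$. Your rewriting of $A(z-\zeta),\dots,D(z-\zeta)$ as clean $\zeta$-translates of $E^h$ and your set-theoretic reformulation $(\mathbb{Z}_h^2\setminus\partial B)\cup(\mathbb{Z}_h^2\setminus N(z))=\mathbb{Z}_h^2\setminus(\partial B\cap N(z))$ make the exposition slightly tidier than the paper's, but the argument is the same.
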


We leave the proof to  appendix I since its proof is direct but unpleasant.

\begin{remark} To consider the  holomorphicity of the kernel $\mathcal{K}^h$ along the neighborhood of the diagonal, we denote
 $$\Gamma:=\big\{(z, \zeta): z\in \partial^- B, \zeta\in B\cap N(z)\}\bigcup
\big\{(z, \zeta): z\in \partial^+ B, \zeta\in N(z)\setminus B\}.$$
Then one can verify from the proof of Theorem \ref{MCIEq88} that
$$\partial_{\bar \zeta}^h\mathcal{K}^h(z,\zeta)=0, \qquad (z, \zeta)\not\in\Gamma. $$
and when $(z, \zeta)\in\Gamma$
we have $\zeta\in N(z)$ and
\begin{eqnarray*}
\partial_{\bar \zeta}^h\mathcal{K}^h(z,\zeta)=
\begin{cases}  -\frac{1}{4h^2}n_1^{+}(z),& \qquad \zeta=z+h,\\
                -\frac{1}{4h^2}n_1^{-}(z),& \qquad \zeta=z-h,\\
                -\frac{1}{4h^2}n_2^{+}(z),& \qquad \zeta=z+hi,\\
                -\frac{1}{4h^2}n_2^{-}(z),& \qquad \zeta=z-hi. \\
\end{cases}
\end{eqnarray*}
\end{remark}

\section{Approximation and convergence}\label{S6}

For the convergence and approximation, we shall see that a function  is holomorphic if and only if it is the scaling limit of discrete holomorphic functions.

\subsection{Approximation}

For  any given  holomorphic function $f$, we come to
 construct  discrete holomorphic functions $f^{h}$ converging to  $f$.

First, we need a  concept about the convergence of discrete sets.

\begin{definition}\label{CDC}
Let $B$ be a bounded open set in $\mathbb{C}$. We say $B^h\subseteq \mathbb{Z}_h^2$ converges to $B$ and denote as
$$\lim_{h\to 0^+} B^h=B$$
if  the distances
between $\partial B$ and $\partial B^h$
 as well as between $\overline B$ and $B^h$  converge  to zero, i.e.,
$$\begin{array}{ll}
\displaystyle{\lim_{h\rightarrow 0^+}}&\displaystyle{\max_{\alpha \in\partial B}\min_{\beta\in\partial B^h}||\alpha-\beta||=0,}\\
\displaystyle{\lim_{h\rightarrow 0^+}}&\displaystyle{\max_{\alpha \in\partial B^h}\min_{\beta\in\partial B}||\alpha-\beta||=0,}\\
\displaystyle{\lim_{h\rightarrow 0^+}}&\displaystyle{\max_{\alpha \in \overline{B}}\min_{\beta\in B^h}||\alpha-\beta||=0,}\\
\displaystyle{\lim_{h\rightarrow 0^+}}&\displaystyle{\max_{\alpha \in B^h}\min_{\beta\in \overline{B}}||\alpha-\beta||=0.}
\end{array}$$
\end{definition}

Recall  for any $\Omega \subset \mathbb{Z}_h^2$ the interior of $\Omega$ is defined by $$\Omega^\circ=\Omega\setminus \partial \Omega,$$
and $S^h$ and $V^h$  represent the discrete surface  measure on $B$ and the Haar measure in the lattice $\mathbb{Z}_h^2$, respectively.
Let $H(B)$ denote the space of holomorphic functions in the domain $B\subset\mathbb C$.

Now we come to the first main result in this subsection.

\begin{theorem}\label{Conv}
Let $B$ be a bounded open set in $\mathbb{C}$ and set $$B^h:=(B\bigcap\mathbb{Z}_h^2)^{\circ}\subset \mathbb{Z}_h^2.$$
If $f \in C^3({\overline{B}})\cap H(B)$,
then the functions
\begin{eqnarray}\label{def:discrete-int} f^h(\zeta):=\int_{\partial B^h}\mathcal{K}^h(z,\zeta)f(z)dS^h(z) \end{eqnarray}
are discrete holomorphic on $(B^h)^{\circ}$ and convergent to $f$ in the sense that
 $$\lim_{h\rightarrow0^+}\max_{B^h}|f-f^h|=0. $$
\end{theorem}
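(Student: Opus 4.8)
The plan is to prove the two assertions separately: discrete holomorphicity will follow from the (almost everywhere) holomorphicity of the kernel established after Theorem~\ref{MCIEq88}, while convergence will be extracted from the discrete Cauchy--Pompeiu formula of Theorem~\ref{MCIEq11}.

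For the discrete holomorphicity on $(B^h)^\circ$, I would differentiate the finite sum defining $f^h$ termwise,
$$\partial_{\bar\zeta}^h f^h(\zeta)=\int_{\partial B^h}\partial_{\bar\zeta}^h\mathcal{K}^h(z,\zeta)\,f(z)\,dS^h(z),$$
and invoke the Remark following Theorem~\ref{MCIEq88}, by which $\partial_{\bar\zeta}^h\mathcal{K}^h(z,\zeta)=0$ unless $(z,\zeta)\in\Gamma$, where for the set $B^h$ the exceptional set $\Gamma$ consists of the pairs with $z\in\partial^-B^h,\ \zeta\in B^h\cap N(z)$ or $z\in\partial^+B^h,\ \zeta\in N(z)\setminus B^h$. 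The key combinatorial point is that $\zeta\in(B^h)^\circ$ forces $N(\zeta)\subset B^h$. Hence if $z\in\partial^-B^h$ then $\zeta\in N(z)$ would give $z\in N(\zeta)\subset B^h$, contradicting $z\notin B^h$; and if $z\in\partial^+B^h$ then $\zeta\in N(z)\setminus B^h$ contradicts $\zeta\in B^h$. In either case no boundary point pairs with $\zeta$ in $\Gamma$, so the integrand vanishes identically and $\partial_{\bar\zeta}^h f^h\equiv 0$ on $(B^h)^\circ$.

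For the convergence I would apply Theorem~\ref{MCIEq11} to the restriction of $f$ to $\overline{B^h}$ (using that $f\in C^3(\overline B)$ extends to a $C^3$ function on a neighborhood of $\overline B$, so $\partial_{\bar z}^h f$ is defined at the lattice points of $\partial B^h$ once $h$ is small). Since $\chi_{B^h}(\zeta)=1$ for $\zeta\in B^h$, subtracting the definition of $f^h$ produces the error representation
$$f(\zeta)-f^h(\zeta)=\int_{B^h}E^h(\zeta-z)\,\partial_{\bar z}^h f(z)\,dV^h(z),\qquad \zeta\in B^h.$$
I would then split $B^h=(B^h)^\circ\cup\partial^+B^h$. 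On the interior all neighbours of $z$ lie in $B$, where $\partial_{\bar z}f\equiv0$, so the second-order accuracy of the symmetric difference together with $f\in C^3$ yields $|\partial_{\bar z}^h f(z)|\le C\,h^2\|f\|_{C^3}$; combined with the uniform bound $\int_{B^h}|E^h(\zeta-z)|\,dV^h(z)\le C'$, which rests on the decay estimate $|E^h(w)|\le C/|w|$ for the fundamental solution (an $L^1_{\mathrm{loc}}$ kernel approximating $1/(\pi z)$), this part is $O(h^2)$ uniformly in $\zeta$.

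The main obstacle is the boundary-layer contribution over $\partial^+B^h$: there a neighbour of $z$ may lie outside $B$, so the difference quotients only satisfy $|\partial_{\bar z}^h f(z)|\le C$, not $O(h^2)$, and smallness must come entirely from the boundary measure weighed against the singular kernel $E^h$. Assuming $\partial B$ regular enough that this layer contains only $O(1/h)$ lattice points, the delicate estimate is the uniform bound $\sum_{z\in\partial^+B^h}|E^h(\zeta-z)|\,h^2\lesssim h\log(1/h)$, whose worst case is $\zeta$ approaching $\partial B^h$ and which reduces to summing $1/|\zeta-z|$ over roughly equally spaced boundary points (the diagonal term $|E^h(0)|\lesssim 1/h$ contributing only $O(h)$). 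Adding the two contributions gives $\max_{B^h}|f-f^h|\le C\,(h^2+h\log(1/h))\to 0$. The technical inputs I would quote rather than reprove are the asymptotics of the discrete fundamental solution from \cite{R} and the count of boundary lattice points furnished by the regularity underlying Definition~\ref{CDC}.
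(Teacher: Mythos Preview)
Your holomorphicity argument is fine and matches the paper's (it simply cites Theorem~\ref{MCIEq88}, by which $\mathcal K^h(z,\cdot)$ is discrete holomorphic on $\mathbb Z_h^2\setminus\partial B^h\supset (B^h)^\circ$).

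For the convergence, however, the ``main obstacle'' you identify is illusory, and the split $B^h=(B^h)^\circ\cup\partial^+B^h$ is unnecessary. Recall that here $B^h:=(B\cap\mathbb Z_h^2)^\circ$, so by the definition of the discrete interior every $z\in B^h$ already satisfies $N(z)\subset B\cap\mathbb Z_h^2\subset B$. Thus all four neighbours of \emph{every} point of $B^h$ (including those in $\partial^+B^h$) lie in $B$, where $\partial_{\bar z}f=0$, and the Taylor/mean-value estimate gives $|\partial_{\bar z}^h f|\le C\|f\|_{C^3}h^2$ uniformly on all of $B^h$ (this is exactly Lemma~\ref{T1}). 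Your boundary-layer estimate, besides being unneeded, would also import a regularity hypothesis on $\partial B$ (the $O(1/h)$ count of boundary lattice points) that the theorem does not assume.

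With the uniform $O(h^2)$ bound in hand, the paper controls the remaining factor via H\"older with exponent $3$: by Hausdorff--Young $E\in L^3(\mathbb Z^2)$ (Lemma~\ref{T3}), whence $\int_{B^h}|E^h(\zeta-z)|^3\,dV^h=O(h^{-1})$, and together with $\int_{B^h}1\,dV^h=O(1)$ one gets $\max_{B^h}|f-f^h|=O(h^{5/3})$. Your alternative of bounding $\int_{B^h}|E^h(\zeta-z)|\,dV^h$ directly by $O(1)$ via the pointwise decay $|E^h(w)|\lesssim 1/|w|$ would also work (and yield $O(h^2)$), but that decay is a separate input from~\cite{R} not proved in the paper; the $L^3$ route via Hausdorff--Young is self-contained.
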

\begin{proof}
According to   Lemma \ref{T4}, we know that $B^h$ converges to $B$.
Since  $\mathcal{K}^h(z, \cdot)$ is discrete holomorphic on  $(B^h)^\circ$
 for any given $z\in \partial B^h$, it follows  that $f^h$ is also discrete holomorphic on $(B^h)^\circ$.

Now we prove that $f^h$ converges to $f$.
By  Theorem \ref{eq:BM} we have
$$f(\zeta)-f^h(\zeta)=\displaystyle{\int_{ B^h}} E^h(\zeta-z) \partial_{\bar z}^h f (z)dV^h(z) $$ for any  $\zeta \in B^h$ so that
$$|f(\zeta)-f^h(\zeta)|\leqq\max_{B^h}|\partial_{\bar z}^h f| \;\displaystyle{\int_{ B^h}}  \left|E^h(\zeta-z)\right| dV^h(z)$$
and by H\"older's  inequality
$$|f(\zeta)-f^h(\zeta)|\leqq\max_{B^h}|\partial_{\bar z}^h f| \left(\displaystyle{\int_{ B^h}}1dV^h(z)\right)^{2/3} \left(\displaystyle{\int_{ B^h}}|E^h(\zeta-z)|^3 dV^h(z)\right)^{1/3}. $$
Since the measure $V^h$ is invariant under group operations  of $\mathbb{Z}_h^2$, we have
\begin{eqnarray*}\int_{ B^h}|E^h(\zeta-z)|^3 dV^h(z)&\leqq& \int_{ Z_h^2}|E^h(z)|^3 dV^h(z)
\\
&=& \sum_{x, y\in \mathbb Z_h^2} \left|\frac{1}{h} E(\frac{x}{h}, \frac{y}{h})\right|^3h^2
\\
&=& \frac{1}{h}\sum_{s, t\in \mathbb Z^2} \left| E(s,t)\right|^3
\\
&=& \frac{1}{h}\int_{\mathbb Z^2} \left| E \right|^3dV
\\
&=& O(h^{-1}).
\end{eqnarray*}
The last step used Lemma \ref{T3}.
By  Lemmas \ref{T1} and  \ref{T2}, we have
 \begin{eqnarray*}\max_{B^h}|\partial_{\bar z}^h f|&=&O(h^2),
\\
\int_{ B^h}1dV^h&=&O(1).
\end{eqnarray*}
The above estimates thus yield
 $$\max_{B^h}|f-f^h|\leqq O(h^2)O(1)O(h^{-\frac{1}{3}})=O(h^{\frac{5}{3}}). $$
 This completes the proof.
\end{proof}

Notice that Theorem \ref{Conv} holds true merely on the specific sets $(B\bigcap\mathbb{Z}_h^2)^{\circ}$. If we impose slightly strong conditions
on $f$, we can show the convergence in more general cases.

\begin{theorem}\label{Conv2}
Let $B$ be a bounded open set in $\mathbb{C}$ and $f\in H(\overline{B})$. If $B^h$ converges to $B$, there exists $f^h: B^h\longrightarrow \mathbb{C}$  discrete holomorphic on $(B^h)^{\circ}$ and  convergent to $f$ in the sense that
$$\lim_{h\rightarrow0^+}\max_{B^h\bigcap B}|f-f^h|=0. $$
\end{theorem}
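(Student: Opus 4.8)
The plan is to reduce the general situation to Theorem~\ref{Conv} by working on a slightly enlarged domain on which $f$ is still holomorphic and smooth, and then restricting the resulting discrete holomorphic function to $B^h$.

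First, since $f\in H(\overline B)$, the function $f$ is holomorphic on some bounded open neighborhood $U$ of $\overline B$. Because $\overline B$ is compact and contained in the open set $U$, I would choose a bounded open set $B'$ with smooth boundary such that $\overline B\subset B'$ and $\overline{B'}\subset U$; in particular $d_0:=\operatorname{dist}(\overline B,\,\mathbb C\setminus B')>0$. On $B'$ the function $f$ is holomorphic and hence $C^\infty$, so $f\in C^3(\overline{B'})\cap H(B')$ and Theorem~\ref{Conv} applies to $B'$. Setting $\tilde B^h:=(B'\cap\mathbb Z_h^2)^\circ$ and
$$g^h(\zeta):=\int_{\partial\tilde B^h}\mathcal K^h(z,\zeta)f(z)\,dS^h(z),$$
Theorem~\ref{Conv} guarantees that $g^h$ is discrete holomorphic on $(\tilde B^h)^\circ$ and that $\max_{\tilde B^h}|f-g^h|\to 0$ as $h\to 0^+$; here the comparison makes sense because $\tilde B^h\subseteq B'\subseteq U$, where $f$ is defined.

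Next I would establish the geometric nesting $B^h\subseteq(\tilde B^h)^\circ$ for all sufficiently small $h$. By Definition~\ref{CDC} every point of $B^h$ lies within distance $o(1)$ of $\overline B$, so for small $h$ each $z\in B^h$ satisfies $\operatorname{dist}(z,\overline B)<d_0/2$ and hence $\operatorname{dist}(z,\partial B')>d_0/2$. Since the discrete neighborhood $N(z)$ has diameter $O(h)$, once $h$ is small enough the whole of $N(z)$, together with the neighborhoods of its points, remains inside $B'$; this forces $z$ to be a discrete interior point of $B'\cap\mathbb Z_h^2$ and in fact an interior point of $\tilde B^h$, giving $B^h\subseteq(\tilde B^h)^\circ$. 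I then define $f^h:=g^h|_{B^h}$. For any $z\in(B^h)^\circ$ one has $N(z)\subseteq B^h$, since a point not lying in $\partial B^h$ must, by Definition~\ref{def:discrete-boudary}, have its entire neighborhood inside $B^h$; consequently $\partial_{\bar z}^h f^h(z)=\partial_{\bar z}^h g^h(z)$, and because $z\in(\tilde B^h)^\circ$ this vanishes. Thus $f^h$ is discrete holomorphic on $(B^h)^\circ$.

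Finally, the convergence is inherited directly: since $f^h=g^h$ on $B^h\subseteq\tilde B^h$, we have
$$\max_{B^h\cap B}|f-f^h|\le\max_{B^h}|f-g^h|\le\max_{\tilde B^h}|f-g^h|\longrightarrow 0,$$
which in fact yields convergence on all of $B^h$, slightly stronger than the stated claim. I expect the only delicate point to be the nesting step $B^h\subseteq(\tilde B^h)^\circ$: it is precisely here that all four distance conditions of Definition~\ref{CDC} are used, together with the strengthened hypothesis $f\in H(\overline B)$, which supplies the buffer region $B'\setminus\overline B$ and the positive gap $d_0$ needed to absorb the $O(h)$ thickness introduced by passing twice to the discrete interior. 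Everything else is a transparent transfer of Theorem~\ref{Conv} through restriction.
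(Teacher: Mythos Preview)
Your proof is correct but takes a different route from the paper. The paper does not pass to an auxiliary enlarged domain $B'$ and then restrict. Instead it defines $f^h$ directly by the same formula as in Theorem~\ref{Conv}, but with the integral taken over $\partial B^h$ itself:
\[
f^h(\zeta)=\int_{\partial B^h}\mathcal K^h(z,\zeta)\,f(z)\,dS^h(z),
\]
which is well defined once $h$ is small enough that $\partial B^h\subset U$ (the open set on which $f$ is holomorphic). The estimates in the proof of Theorem~\ref{Conv} are then simply re-run with $B^h$ in place of $(B\cap\mathbb Z_h^2)^\circ$; nothing in those estimates used the specific form of the discrete set, only that $\overline{B^h}$ eventually lies in a region where $f$ is $C^3$ and holomorphic, and that the $B^h$ stay bounded. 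Your construction, by contrast, treats Theorem~\ref{Conv} as a black box applied to a fixed larger domain $B'$, and the only new work is the geometric nesting $B^h\subseteq(\tilde B^h)^\circ$. This is a clean reduction and arguably more modular; the paper's approach has the advantage that the same formula for $f^h$ is reused verbatim in Theorem~\ref{Conv-D} on convergence of derivatives, so the construction is uniform across the paper. Two minor remarks: your nesting step in fact uses only one of the four limits in Definition~\ref{CDC} (the one bounding $\max_{\alpha\in B^h}\min_{\beta\in\overline B}\|\alpha-\beta\|$), not all four; and, as you note, your argument gives convergence on all of $B^h$, which is slightly stronger than stated.
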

\begin{proof}

Take  an open set $U$ of $\mathbb C$ such that $f\in H(U)$ and $\overline B\subset U$.
If $B^h$ converges to $B$, we have $\partial B^h\subset U$ for any $h$ sufficiently  small. Therefore we can define
  $f^h$ as in (\ref{def:discrete-int}).
  With this modification,  the result follows from the same argument as in Theorem \ref{Conv} with  $B^h$ in place of $(B\bigcap\mathbb{Z}_h^2)^{\circ}$.
\end{proof}

\subsection{Convergence}

The scaling limit of  discrete holomorphic functions is shown to be  holomorphic in this subsection.

\begin{theorem}\label{INV-Conv}
Let $B$ be a bounded open set in $\mathbb{C}$
and  $B^h\subset\mathbb Z_h^2$  convergent to $B$. If $f^h: B^h\longrightarrow\mathbb C$ is discrete
holomorphic on $(B^h)^\circ$   and  convergent to a function   $f\in C({B})$ in the sense that
$$\lim_{h\rightarrow0^+}\max_{B^h\bigcap B}|f-f^h|=0,$$
then $f\in H(B)$.
\end{theorem}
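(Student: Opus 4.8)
The plan is to prove that the limit $f$ satisfies the Cauchy--Riemann equation $\partial_{\bar z}f=0$ in the distributional sense and then to invoke elliptic regularity (Weyl's lemma for $\partial_{\bar z}$) to upgrade this to genuine holomorphicity. Fix an arbitrary test function $\varphi\in C_c^\infty(B)$ and let $K=\operatorname{supp}\varphi$, a compact subset of $B$. Let $\varphi^h$ denote the restriction of $\varphi$ to the lattice $\mathbb Z_h^2$. Because $B^h$ converges to $B$ in the sense of Definition \ref{CDC} and $K$ is compactly contained in $B$, for all sufficiently small $h$ the support of $\varphi^h$, together with its one--step neighborhood, lies inside $(B^h)^\circ$; in particular every lattice point where $\partial_{\bar z}^h\varphi^h$ is nonzero belongs to $(B^h)^\circ\subset B^h\cap B$, where $f^h$ is defined and $\partial_{\bar z}^h f^h=0$ by hypothesis.

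The first step is a discrete summation by parts. Under the bilinear pairing $\langle u,v\rangle_h=\sum_{z\in\mathbb Z_h^2}u(z)v(z)h^2$ each symmetric difference $\partial_i^h$ is anti--self--adjoint, since the index shifts produce only boundary terms that cancel when one of the factors has compact support; consequently $\partial_{\bar z}^h=\tfrac12(\partial_1^h+i\partial_2^h)$ is anti--self--adjoint as well. Therefore
$$\sum_{z}f^h(z)\,\partial_{\bar z}^h\varphi^h(z)\,h^2=-\sum_{z}\partial_{\bar z}^h f^h(z)\,\varphi^h(z)\,h^2 .$$
Since the nonzero terms on the right occur only at points of $(B^h)^\circ$, where $\partial_{\bar z}^h f^h=0$, the right--hand side vanishes, giving
$$\sum_{z}f^h(z)\,\partial_{\bar z}^h\varphi^h(z)\,h^2=0\qquad\text{for all sufficiently small }h .$$

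The second step is to pass to the limit $h\to 0^+$. A Taylor expansion of the smooth function $\varphi$ gives $\partial_{\bar z}^h\varphi^h=\partial_{\bar z}\varphi+O(h^2)$ uniformly on a neighborhood of $K$, while $f^h\to f$ uniformly on $B^h\cap B$ and $f$ is bounded on $K$. Hence the displayed sum is a Riemann sum that converges to the continuous integral, yielding $\int_B f\,\partial_{\bar z}\varphi\,dV=0$ for every $\varphi\in C_c^\infty(B)$. This is exactly the assertion that $\partial_{\bar z}f=0$ in the distributional sense. Writing $\Delta=4\,\partial_z\partial_{\bar z}$, it follows that $f$ is distributionally harmonic, hence smooth by Weyl's lemma, and then $\partial_{\bar z}f=0$ holds classically, so that $f\in H(B)$.

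The step I expect to be the main obstacle is the careful bookkeeping in the two limiting arguments. One must verify that for small $h$ the support of $\varphi^h$ \emph{and its one--step neighborhood} (where $\partial_{\bar z}^h\varphi^h$ lives) are genuinely contained in $(B^h)^\circ$, so that the summation by parts is boundary--free and the discrete holomorphicity applies at every relevant point; the convergence $B^h\to B$ of Definition \ref{CDC} is precisely what secures this. One must also control the Riemann--sum convergence using the \emph{uniform} convergence $f^h\to f$ on $B^h\cap B$ rather than mere pointwise convergence, which is exactly the hypothesis provided. No delicate behavior of the kernel $\mathcal K^h$ is needed here, which is fortunate since the discrete normal vectors may diverge under scaling limits.
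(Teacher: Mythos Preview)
Your argument is correct and follows essentially the same route as the paper: show $\int_B f\,\partial_{\bar z}\varphi\,dV=0$ for all $\varphi\in C_c^\infty(B)$ by discrete integration by parts against $f^h$ and a Riemann--sum limit, then invoke elliptic regularity. The paper organizes the limit as a three--term splitting $I_1^h+I_2^h+I_3^h$ and handles $I_1^h$ through its discrete Green formula (Theorem~\ref{Stoke}) with explicit boundary terms, whereas you use the more elementary anti--self--adjointness of $\partial_{\bar z}^h$ on compactly supported functions; the content is the same.

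One point to make explicit: your claim that $\operatorname{supp}\varphi$ together with its one--step neighborhood lies inside $(B^h)^\circ$ for small $h$ is not immediate from Definition~\ref{CDC}; it is precisely Proposition~\ref{Interior-cover} (for $U\subset\subset B$ one has $U\cap\mathbb Z_h^2\subset B^h$), whose proof uses the discrete connectedness of balls in $\mathbb Z_h^2$. You have correctly flagged this as the main obstacle, but it does require its own argument rather than following directly from the four limit conditions.
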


Its proof relies on   the following  key  fact.

\begin{proposition}\label{Interior-cover}
Let $B$ be a bounded open set in $\mathbb{C}$
and  $B^h\subset\mathbb Z_h^2$  convergent to $B$. Then for any $U\subset\subset B$, there exists $\delta>0$ such that when $h<\delta$, we have $$U\bigcap \mathbb{Z}_h^2 \subset B^h.$$
\end{proposition}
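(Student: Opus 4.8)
The plan is to argue by contradiction, converting the Hausdorff-type convergence encoded in Definition \ref{CDC} into information about individual lattice points by means of a discrete connectedness (lattice-path) argument. First I would fix $U\subset\subset B$ and set
$$d:=\operatorname{dist}(\overline U,\,\mathbb C\setminus B),$$
which is strictly positive because $\overline U$ is compact and contained in the open set $B$. Since $B$ is open we have $\partial B\subset\mathbb C\setminus B$, so every point of $\overline U$ also lies at distance at least $d$ from $\partial B$.

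Next I would extract two quantitative facts from Definition \ref{CDC}. From the limit $\max_{\alpha\in\partial B^h}\min_{\beta\in\partial B}\|\alpha-\beta\|\to 0$ I obtain a threshold $\delta_1$ so that for $h<\delta_1$ every point of $\partial B^h$ lies within $d/4$ of $\partial B$; combined with the preceding estimate, any point $p$ lying within $d/4$ of $\overline U$ satisfies $\operatorname{dist}(p,\partial B)>3d/4$ and hence cannot belong to $\partial B^h$. From the limit $\max_{\alpha\in\overline B}\min_{\beta\in B^h}\|\alpha-\beta\|\to 0$ I obtain $\delta_2$ so that for $h<\delta_2$ every point of $\overline B$—in particular every $z\in U\cap\mathbb Z_h^2$—has a companion $\beta\in B^h$ with $\|z-\beta\|<d/4$. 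I then set $\delta=\min(\delta_1,\delta_2)$.

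The heart of the argument is the following. Suppose $h<\delta$ and, for contradiction, some $z\in U\cap\mathbb Z_h^2$ satisfies $z\notin B^h$. Choosing the companion $\beta\in B^h$ above, I join $z$ to $\beta$ by an axis-aligned staircase lattice path $z=p_0,p_1,\dots,p_k=\beta$ with consecutive points lattice-adjacent. Every $p_j$ lies in the closed rectangle with opposite corners $z$ and $\beta$, so $\|p_j-z\|\leqslant\|z-\beta\|<d/4$; thus each $p_j$ sits within $d/4$ of $\overline U$ and, by the first fact above, avoids $\partial B^h$. On the other hand the path begins outside $B^h$ and ends inside it, so there is a first index $j$ with $p_j\in B^h$; then $p_{j-1}\notin B^h$, $p_j\in B^h$, and $p_j\in N(p_{j-1})$, which by Definition \ref{def:discrete-boudary} forces $p_{j-1}\in\partial B^h$. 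This contradicts the fact that the path avoids $\partial B^h$, so $z\in B^h$, giving $U\cap\mathbb Z_h^2\subset B^h$.

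I expect the bridging step in the last paragraph to be the main obstacle: mere metric closeness of $B^h$ to $B$ does not by itself certify membership of a given lattice point, so the proof must lean on the purely combinatorial observation that any lattice edge crossing from $B^h$ to its complement has both endpoints in $\partial B^h$, together with a path that is trapped in the region where $\partial B^h$ is absent. A secondary point of care is to keep the \emph{entire} path inside the $d/4$-neighbourhood of $\overline U$; selecting a monotone staircase path rather than an arbitrary one is precisely what guarantees this containment.
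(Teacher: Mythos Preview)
Your proposal is correct and follows essentially the same strategy as the paper: both proofs use the convergence condition on $\partial B^h$ to exclude boundary points near $U$, use the convergence condition on $B^h$ to place a point of $B^h$ near any $z\in U\cap\mathbb Z_h^2$, and then derive a contradiction via a lattice-path crossing argument. The only tactical difference is that the paper first reduces by compactness to the case $U=B(z_0,R)$ and runs the path inside the discretely connected set $U\cap\mathbb Z_h^2$, whereas you keep $U$ general and confine a monotone staircase path to the rectangle spanned by $z$ and its companion $\beta$; both devices serve the same purpose of keeping the path away from $\partial B^h$.
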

\begin{proof} Without loss of generality, we can
assume that $U=B(z_0, R)\subset\subset B$ due to compactness.

Since $B^h$ is convergent to $B$, by definition we  have
$$\lim_{h\rightarrow 0^+}\max_{\alpha \in \overline B}\min_{\beta \in B^h}||\alpha-\beta||=0, $$
which means that for any given $\epsilon>0$
$$B^h+B(0,\epsilon)\supset \overline B\ni z_0,$$
provided $h$  sufficiently small.
In particular, taking $\epsilon=R$, we have
$$ B^h+B(0,R)\ni z_0,$$
i.e.,
$(z_0-B(0,R))\cap  B^h\neq \phi.$
By assumption $U=B(z_0, R)=z_0-B(0, R)$, this means
$$U\bigcap B^h\neq \emptyset$$
 when $h$ is small enough.

On the other hand, we denote
$$d={\rm{dist}}(U, \partial B):=\inf\limits_{\alpha\in U, \beta \in \partial B}||\alpha-\beta||.$$
Again the convergence of $B^h$  implies
 $$\lim_{h\rightarrow 0^+}\max_{\alpha \in \partial B^h}\min_{\beta \in \partial B}||\alpha-\beta||=0. $$  For any $h$ sufficiently small, we then have
$$\partial B^h\subset \partial B + B(0,d)$$
 so that
$$\partial B^h\bigcap U \subset  (\partial B + B(0,d))\bigcap U=\emptyset$$
since $d={\rm{dist}}(U, \partial B)$.

Now we have proved that   $$U\bigcap  B^h \neq\emptyset, \qquad U\bigcap \partial B^h =\emptyset$$  for any  $h$ small enough. Based on these facts, we come to show that
$$U\bigcap \mathbb{Z}_h^2 \subset B^h, \qquad \forall\  h<<1.$$

Assume this is not valid, then there exists
$h^*>0$
such that
$$U\bigcap B^{h^*}\neq \emptyset, \qquad U\bigcap\partial B^{h^*} =\emptyset,\qquad
U\bigcap\mathbb Z_{h^*}^2\not\subset B^{h^*}.$$
Now we take two elements
$$\alpha\in U\bigcap B^{h^*}=U\bigcap Z_{h^*}^2\bigcap B^{h^*}, \qquad \beta\in (U\bigcap \mathbb Z_{h^*}^2) \setminus B^{h^*}.$$
Since
$$\alpha, \beta\in U\bigcap\mathbb{Z}_{h^*}^2=B(z_0, R)\bigcap\mathbb{Z}_{h^*}^2$$
and the last set is   discrete connected in the square lattice $\mathbb{Z}_{h^*}^2$, there exist $\{z_k\}_{k=1}^{m} \subset U\bigcap\mathbb{Z}_{h^*}^2$ such that
$$z_1=\alpha, \qquad z_m=\beta, \qquad z_{k+1}\in N(z_{k})$$   for any $k=1,2,\dots,m-1$. Notice that
$\alpha\in  B^{h^*}$ and $\beta\notin B^{h^*}$,
we can take $1\leq k^*\leq m-1$  such that
$$z_{k^*}\in  B^{h^*}, \qquad z_{k^*+1}\notin B^{h^*}.$$
They are both in $\partial B^{h^*}$ since $z_{k^*+1}\in N(z_{k^*})$. This implies that
$$z_{k^*}\in \partial  B^{h^*}\bigcap U,$$ which violates the assumption $\partial B^{h^*}\bigcap U =\emptyset$. This completes the proof.
\end{proof}

The preceding proposition   results in  the  $w^*$-convergence of discrete measures, which is essential in the proof of Theorem \ref{INV-Conv}.
\begin{lemma}\label{W*Conv}
Let $B$ be a bounded open set in $\mathbb{C}$
and  $B^h\subset\mathbb Z_h^2$  convergent to $B$. Then we have
$$\lim_{h\rightarrow 0^+}\int_{B^h\bigcap B}fdV^h=\int_{B}fdV $$ for any $f\in C_c(B)$.
That is,
$$w^*-{\lim}_{h\to 0^+} V^h\left.\right|_{B^h\cap B}= V\left.\right|_{B} \quad \text{ in }\quad \big(C_c(B)\big)^*. $$

\end{lemma}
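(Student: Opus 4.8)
The plan is to recognize the discrete integral as a Riemann sum for $\int_B f\,dV$ and to use Proposition \ref{Interior-cover} to remove the discrepancy between the summation domain $B^h\cap B$ and the support of $f$. By the definition of the Haar measure $V^h$,
$$\int_{B^h\cap B}f\,dV^h=\sum_{z\in B^h\cap B}f(z)\,h^2,$$
so the whole statement reduces to comparing this lattice sum with the continuous integral. Since the two displayed forms in the statement are mere reformulations of each other, it suffices to establish the limit for every $f\in C_c(B)$.

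First I would set $K:=\mathrm{supp}\,f$, a compact subset of the open set $B$ because $f\in C_c(B)$. Choosing an open set $U$ with $K\subset U\subset\subset B$ and applying Proposition \ref{Interior-cover} to $U$, I obtain $\delta>0$ with $U\cap\mathbb Z_h^2\subset B^h$ whenever $h<\delta$. As $K\subset U\subset B$, this gives $K\cap\mathbb Z_h^2\subset B^h\cap B$ for all such $h$, and since $B^h\subset\mathbb Z_h^2$ we also have $(B^h\cap B)\cap K\subset K\cap\mathbb Z_h^2$, so the two sets coincide. Because $f$ vanishes off $K$, the lattice points of $B^h\cap B$ outside $K$ contribute nothing, while every lattice point of $K$ is captured; hence for $h<\delta$,
$$\int_{B^h\cap B}f\,dV^h=\sum_{z\in K\cap\mathbb Z_h^2}f(z)\,h^2=\sum_{z\in\mathbb Z_h^2}f(z)\,h^2,$$
the last sum being the full Riemann sum for the zero extension of $f$ to $\mathbb R^2$.

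It then remains to prove the elementary convergence $\sum_{z\in\mathbb Z_h^2}f(z)\,h^2\to\int_{\mathbb R^2}f\,dV=\int_B f\,dV$, where the equality holds because $\mathrm{supp}\,f\subset B$. I would assign to each $z\in\mathbb Z_h^2$ the cell $Q_z:=z+[0,h)^2$ of area $h^2$, so that the cells tile $\mathbb R^2$, and estimate
$$\Big|\sum_{z\in\mathbb Z_h^2}f(z)h^2-\int_{\mathbb R^2}f\,dV\Big|\le\sum_{z:\,Q_z\cap K\neq\emptyset}\int_{Q_z}|f(z)-f(x)|\,dx\le C_K\,\omega_f(\sqrt2\,h),$$
where $\omega_f$ is the modulus of continuity of $f$, and $C_K$ bounds, uniformly for $h\le1$, the total area of the cells meeting $K$ (which lie in a fixed bounded neighbourhood of $K$). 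Since $\omega_f(\sqrt2\,h)\to0$ by the uniform continuity of $f$, the right-hand side tends to $0$ and the claim follows.

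The Riemann-sum estimate is routine; the only genuine point is the first reduction, where one must be certain that \emph{every} lattice point of $K$ eventually belongs to $B^h$, so that no mass is lost near $\partial B$. This is exactly what Proposition \ref{Interior-cover} supplies, and I would regard it as the crux of the argument, since it is precisely where the hypothesis that $B^h$ converges to $B$ enters.
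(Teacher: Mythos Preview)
Your proof is correct and follows essentially the same route as the paper: invoke Proposition~\ref{Interior-cover} on an open $U$ with $\mathrm{supp}\,f\subset U\subset\subset B$ to reduce the sum over $B^h\cap B$ to the full lattice sum $\sum_{z\in\mathbb Z_h^2}f(z)h^2$, and then recognize this as a Riemann sum for $\int_B f\,dV$. The only difference is cosmetic---the paper sums over $U\cap\mathbb Z_h^2$ rather than $K\cap\mathbb Z_h^2$ and simply asserts the Riemann-sum convergence, whereas you spell out the modulus-of-continuity estimate.
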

\begin{proof}
For any given $f\in C_c(B)$, we can take $U$ to be a finite union of balls
 such that  \begin{eqnarray}\label{def:subset-B-h}
 {\bf{supp}}f \subset U \subset\subset B.
\end{eqnarray}

In view of Proposition \ref{Interior-cover},  for $h$ sufficiently small we have
$$U\bigcap \mathbb{Z}_h^2 \subset B^h \subset \mathbb Z_h^2,$$
which intersects with $U$  to yield  $B^h\bigcap U=U\bigcap \mathbb{Z}_h^2$.
This together with (\ref{def:subset-B-h})  concludes that
 $$\int_{B^h\bigcap B}fdV^h=\int_{B^h\bigcap U}fdV^h=\int_{U\bigcap \mathbb{Z}_h^2}fdV^h. $$
The last integral  is identical to a Riemann sum  of a certain Riemann integral, which means $$\lim_{h\rightarrow 0^+}\int_{B^h\bigcap B}fdV^h=\lim_{h\rightarrow 0^+}\int_{U\bigcap \mathbb{Z}_h^2 }fdV^h=\int_{U}fdV=\int_{B}fdV. $$
\end{proof}

Now we can give the proof of our main result in this subsection.

\begin{pot2}
Let  $f^h: B^h\longrightarrow\mathbb C$ be discrete
holomorphic on $(B^h)^\circ$   and  convergent to a function   $f\in C({B})$.
In order to  prove   $f\in H(B)$,  we only need   to verify
$$\int_{B}f\partial_{\bar z}\phi dV=0 $$
for any  $\phi\in C_0^{\infty}(B)$, i,e, $f$ is  holomorphic in the sense of distribution.
By Lemma \ref{W*Conv}, $$\lim_{h\rightarrow 0^+}\int_{B^h\bigcap B}f\partial_{\bar z}\phi dV^h=\int_{B}f\partial_{\bar z}\phi dV. $$
It remains  to prove $$\lim_{h\rightarrow 0^+}\int_{B^h\bigcap B}f\partial_{\bar z}\phi dV^h=0. $$

 We now  separate the last integral into three parts \begin{eqnarray}\label{def:I-h-integral}\displaystyle{\int_{B^h\bigcap B}f\partial_{\bar z}\phi dV^h}=I_1^h+I_2^h+I_3^h,\end{eqnarray}
 where
\begin{eqnarray*}
I_1^h&=&\displaystyle{\int_{B^h\bigcap B}f^h\partial_{\bar z}^h\phi dV^h}, \\
I_2^h&=&\displaystyle{\int_{B^h\bigcap B}(f-f^h)\partial_{\bar z}^h\phi dV^h}, \\
I_3^h&=&\displaystyle{\int_{B^h\bigcap B}f(\partial_{\bar z}\phi- \partial_{\bar z}^h\phi)dV^h}.
\end{eqnarray*}

For  the first term $I_1^h$, since $\phi\in C_0^{\infty}(B)$ we have  $${\bf supp}\partial_{\bar z}^h\phi \subset B$$
for $h$ sufficiently  small. Hence
  $$I_1^h=\displaystyle{\int_{B^h}f^h\partial_{\bar z}^h\phi dV^h}$$
and by  Theorem \ref{Stoke} we get
\begin{equation}\label{Eq-INV-Conv}\begin{array}{lll}
I_1^h                                               &=&\displaystyle{\frac{1}{4}\int_{\partial B^h}\Big(\phi(z+h)n_1^{+}(z)+\phi(z-h)n_1^{-}(z)+i\phi(z+ih)n_2^{+}(z)}\\
                                                & &+i\phi(z-ih)n_2^{-}(z) \Big)f^h(z)dS^h(z)-\displaystyle{\int_{B^h}\phi\partial_{\bar z}^hf^hdV^h}.\end{array}\end{equation}

Pick an open set $U$ such that
$${\bf supp}\phi\subset \subset U\subset\subset B$$
and let $h$ be small enough obeying
$${\bf supp} \phi+B(0,4h) \subset U.$$
By Proposition \ref{Interior-cover}, we have
 $$U\bigcap \mathbb{Z}_h^2 \subset B^h, \qquad\forall\  h<<1.$$  As a result,$$\big({\bf supp} \phi+B(0,4h)\big)\bigcap \mathbb{Z}_h^2 \subset B^h.$$
This implies  if $$\alpha \in \big({\bf supp}\phi+B(0,2h)\big)\bigcap \mathbb{Z}_h^2,$$ then
 $$N(\alpha)\subset B(\alpha,2h)\bigcap\mathbb{Z}_h^2 \subset B^h$$ so that $$\big({\bf supp} \phi+B(0,2h)\big)\bigcap \mathbb{Z}_h^2 \subset (B^h)^{\circ}.$$
Consequently,
$$\big({\bf supp} \phi+B(0,2h)\big)\bigcap \partial B^h=\emptyset $$
which means   that $\phi$, $\phi(\cdot\pm h)$ and $\phi(\cdot \pm ih)$ all vanish on $\partial B^h$.
   It follows that $$\displaystyle{\int_{\partial B^h}\big(\phi(\cdot+h)n_1^{+}+\phi(\cdot-h)n_1^{-}+i\phi(\cdot+ih)n_2^{+}}+i\phi(\cdot-ih)n_2^{-} \big)f^hdS^h=0 $$ and
$$\displaystyle{\int_{B^h}\phi\partial_{\bar z}^hf^hdV^h}=\displaystyle{\int_{B^h\setminus \partial B^h}\phi\partial_{\bar z}^hf^hdV^h}=\displaystyle{\int_{(B^h)^{\circ}}\phi\partial_{\bar z}^hf^hdV^h}=0. $$
The last step used the fact that $f^h$ is discrete holomorphic.
Hence  $I_1^h$ vanishes for sufficiently small $h$ according to  (\ref{Eq-INV-Conv}).

Next we estimate  the second item $I_2^h$ in (\ref{def:I-h-integral}). By definition,
\begin{equation*}
\begin{array}{lll}
|I_2^h|&\leqq&\displaystyle{\int_{B^h\bigcap B}|(f-f^h)\partial_{\bar z}^h\phi| dV^h}\\
     &\leqq&\max\limits_{B^h\bigcap B}|f-f^h|\max\limits_{\mathbb{R}^2}|\partial_{\bar z}^h\phi|\displaystyle{\int_{B^h\bigcap B}}1dV^h.
\end{array}
\end{equation*}
Since $B$ is bounded, we can assume $B\subset B(0, R)$ for some $R>0$ so that
$$B^h\bigcap B\subset B(0, R)\cap \mathbb Z_h^2.$$
As  shown in the proof of Lemma \ref{W*Conv},
$$\lim_{h\to 0^+} \int_{B(0, R)\cap \mathbb Z_h^2}1dV^h= \int_{B(0, R)}1dV$$
and this means
$$\int_{B^h\bigcap B}1dV^h=O(1). $$
Since $\phi\in C_0^{\infty}(B)$, we have $$\max\limits_{\mathbb{R}^2}|\partial_{\bar z}^h\phi|=O(1). $$
In addition, according to assumption, $$\max\limits_{B^h\bigcap B}|f-f^h|=o(1). $$
The above facts together imply that  $I_2^h=o(1). $

Now it remains  to show that $$I_3^h=o(1).$$
By definition
\begin{equation*}
\begin{array}{lll}
|I_3^h|&\leqq&\displaystyle{\int_{B^h\bigcap B}|f(\partial_{\bar z}\phi-\partial_{\bar z}^h\phi)| dV^h}\\
     &\leqq&\max\limits_{{\bf supp}\phi+\overline{B(0,h)}}|f|\max\limits_{\mathbb{R}^2}|\partial_{\bar z}\phi-\partial_{\bar z}^h\phi|\displaystyle{\int_{B^h\bigcap B}}1dV^h.
\end{array}
\end{equation*}
Since $f \in C(B)$ and $\phi \in C_0^{\infty}(B)$, we have
$$\max\limits_{{\bf supp}\phi+\overline{B(0,h)}}|f|=O(1), \qquad
\max\limits_{\mathbb{R}^2}|\partial_{\bar z}\phi-\partial_{\bar z}^h\phi|=o(1). $$
As we have observed, $$\displaystyle{\int_{B^h\bigcap B}}1dV^h=O(1), $$
we thus obtain $I_3^h=o(1)$ and this completes the proof.
\end{pot2}

\begin{remark} The  convergence problem of discrete holomorphic fermions has been considered in the study of the invariance of Ising model  by Smirnov \cite{38}.
 His approach relies  on an important fact that  the discrete holomorphic fermion is  a solution of the discrete Riemann boundary value problem.
\end{remark}

\section{Uniform convergence of derivatives}\label{S7}
 The uniform convergence of derivatives up to second order
is shown  for the family of
the discrete holomorphic functions  in this section.

Notice that if  $f^h$ is defined on $B^h$, then its first order derivatives    can only be defined in
 $(B^h)^{\circ}$ and  similarly for  its second  order  derivatives.

If $B^h$ converges to $B$,
 we find
  $B^h \bigcap B$ is almost the same as   $B\bigcap \mathbb Z_h^2$ in view of Proposition \ref{Interior-cover} and, by direct verification,
$$\lim_{h\to {0^+}} (B^h)^{\circ}=B,\qquad \lim_{h\to {0^+}} (B^h)^{\circ \circ}=B.$$
Here    $(B^h)^{\circ \circ}$ stands for the discrete interior of $(B^h)^{\circ}$.

\begin{theorem}\label{Conv-D}
Let $B$ be a bounded open set and $f\in H(\overline{B})$. If $B^h$ converges to $B$, then  there exists $f^h: B^h\longrightarrow \mathbb{C}$   discrete holomorphic on $(B^h)^{\circ}$ and   uniformly convergent to $f$ in the sense that
\begin{eqnarray*}&& \lim_{h\rightarrow0^+}\max_{B^h\bigcap B}|f-f^h|=0,
\\
&& \lim_{h\rightarrow0^+}\max_{(B^h)^{\circ}\bigcap B}|\partial_{z}f-\partial_{z}^hf^h|=0,
\\
&& \lim_{h\rightarrow0^+}\max_{(B^h)^{\circ \circ}\bigcap B}|(\partial_{z})^2f-(\partial_{z}^h)^2f^h|=0.
\end{eqnarray*}

\end{theorem}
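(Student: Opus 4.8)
The plan is to take for $f^h$ the very same discrete Cauchy integral as in Theorem \ref{Conv2},
\[
 f^h(\zeta)=\int_{\partial B^h}\mathcal{K}^h(z,\zeta)f(z)\,dS^h(z),
\]
which is well defined once $h$ is small (so that $\partial B^h$ lies inside a neighbourhood $U\supset\overline B$ on which $f$ is holomorphic) and is discrete holomorphic on $(B^h)^\circ$ by Theorem \ref{MCIEq88}, exactly as in Theorems \ref{Conv} and \ref{Conv2}. With this choice the zeroth-order limit $\max_{B^h\cap B}|f-f^h|\to0$ is literally Theorem \ref{Conv2}, so the only new work is to show that this single $f^h$ also controls the first two discrete derivatives. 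The whole argument hangs on the exact error representation furnished by the discrete Cauchy–Pompeiu formula (Theorem \ref{MCIEq11}): since $\chi_{B^h}(\zeta)=1$ for $\zeta\in B^h$,
\[
 f(\zeta)-f^h(\zeta)=\int_{B^h}E^h(\zeta-z)\,\partial_{\bar z}^h f(z)\,dV^h(z),\qquad \zeta\in B^h.
\]

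First I would differentiate this identity discretely in $\zeta$. For $k=1$ the operator $\partial_z^h$ may be applied at $\zeta$ provided $\zeta,\zeta\pm h,\zeta\pm ih\in B^h$, i.e.\ for $\zeta\in(B^h)^\circ$; for $k=2$ the operator $(\partial_z^h)^2$ requires the two-step neighbourhood, i.e.\ $\zeta\in(B^h)^{\circ\circ}$. These are exactly the domains appearing in the statement. Because the error formula holds at every point of $B^h$, the difference operator passes under the integral onto the kernel, yielding for $k=1,2$
\[
 (\partial_z^h)^k f(\zeta)-(\partial_z^h)^k f^h(\zeta)=\int_{B^h}\big[(\partial_z^h)^kE^h\big](\zeta-z)\,\partial_{\bar z}^h f(z)\,dV^h(z).
\]
To reach the continuous derivatives I would then split
\[
 (\partial_z)^k f-(\partial_z^h)^k f^h=\big[(\partial_z)^k f-(\partial_z^h)^k f\big]+\big[(\partial_z^h)^k f-(\partial_z^h)^k f^h\big].
\]
The first bracket is the consistency error of the symmetric difference quotient; as $f\in H(\overline B)$ is smooth, a Taylor expansion as in Lemma \ref{T1} bounds it by $O(h^2)$ uniformly on compacta. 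The second bracket is precisely the kernel integral just displayed.

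The crux is therefore an $L^1$-in-$z$ bound for the iterated discrete derivatives of the fundamental solution. Writing $E^h(\xi)=h^{-1}E(\xi/h)$ and unwinding the scaling of the symmetric differences gives $(\partial_z^h)^kE^h(\xi)=h^{-(k+1)}G_k(\xi/h)$, where $G_k:=(\partial_z^1)^kE$ is the $k$-th unit-step discrete $z$-derivative of $E$. Using $\max_{B^h}|\partial_{\bar z}^h f|=O(h^2)$ (Lemmas \ref{T1} and \ref{T2}, since $\partial_{\bar z}f=0$) and the translation invariance of $V^h$, the substitution $z=\zeta-hs$ gives
\[
 \big|(\partial_z^h)^k f(\zeta)-(\partial_z^h)^k f^h(\zeta)\big|\le O(h^2)\int_{B^h}\big|(\partial_z^h)^kE^h(\zeta-z)\big|\,dV^h(z)= O(h^2)\,h^{1-k}\!\!\sum_{s:\ \zeta-hs\in B^h}\!\!\big|G_k(s)\big|.
\]
Since $B^h$ has diameter $O(1)$, the sum runs over $|s|\lesssim h^{-1}$, so everything reduces to the decay of $G_k$. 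This decay estimate, $|G_k(s)|=O(|s|^{-(k+1)})$, is the one genuinely hard point and the main obstacle; I expect it because $E$ is asymptotic to the continuous Cauchy kernel $1/(\pi z)$, and I would establish it in the appendix by applying the difference operators under the oscillatory integral (\ref{Fund-Sol}) — which inserts extra trigonometric factors — and then integrating by parts in $(u,v)$ to convert frequency regularity into spatial decay, just as in Lemma \ref{T3}. Granting it, the case $k=1$ has $\sum_{|s|\lesssim h^{-1}}|G_1(s)|=O(\log\tfrac1h)$ and total bound $O(h^2\log\tfrac1h)\to0$, while for $k=2$ the decay $|G_2(s)|=O(|s|^{-3})$ is summable, so $\sum_s|G_2(s)|=O(1)$ and the bound is $O(h)\to0$. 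Together with the $O(h^2)$ consistency error this yields both derivative limits and completes the proof. It is worth stressing that the Hölder-$3$ estimate used for the zeroth order \emph{diverges} for $k=2$; the $L^\infty$–$L^1$ pairing above, which spends the pointwise smallness of $\partial_{\bar z}^h f$ rather than its $L^3$ norm, is exactly what rescues the second-derivative case.
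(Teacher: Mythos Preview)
Your architecture is exactly the paper's: the same $f^h$, the same error identity from Theorem \ref{MCIEq11}, the same differentiation under the integral on $(B^h)^\circ$ and $(B^h)^{\circ\circ}$, and the same splitting $(\partial_z)^kf-(\partial_z^h)^kf^h=[(\partial_z)^kf-(\partial_z^h)^kf]+[(\partial_z^h)^kf-(\partial_z^h)^kf^h]$. The only substantive divergence is in how you bound the kernel integral.

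The paper does not attempt pointwise decay $|G_k(s)|=O(|s|^{-k-1})$. Instead (Lemma \ref{Int-FS}) it observes that the Fourier symbol of $G_1$, namely $(i\sin u+\sin v)/(i\sin u-\sin v)$, is bounded, so $G_1\in L^2(\mathbb Z^2)$ by Plancherel; then for $k=1$ it pairs $L^\infty\times L^2\times L^2$ via Cauchy--Schwarz to get $O(h^2)\cdot O(h^{-1})\cdot O(1)=O(h)$. For $k=2$ it shows that the symbol $(i\sin u+\sin v)^2/(i\sin u-\sin v)$ together with its partial derivatives up to order two lies in $L^{3/2}(\mathbb T^2)$, applies Hausdorff--Young to get $(1+x^2+y^2)G_2\in L^3(\mathbb Z^2)$, and then H\"older against $(1+x^2+y^2)^{-1}\in L^{3/2}$ gives $G_2\in L^1(\mathbb Z^2)$ globally. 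This is short and entirely soft.

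Your route via pointwise decay is legitimate and even yields a sharper rate $O(h^2\log h^{-1})$ for $k=1$, but you are underestimating the work. The phrase ``just as in Lemma \ref{T3}'' is misleading: Lemma \ref{T3} uses Hausdorff--Young, not integration by parts. More seriously, the symbols are \emph{not} smooth at the lattice singularities $(u,v)\in\{0,\pm\pi\}^2$: the symbol of $G_1$ behaves like $\bar w/w$ near the origin (a bounded discontinuity), and that of $G_2$ like $\bar w^2/w$ (continuous but not $C^1$). Naive repeated integration by parts in $(u,v)$ therefore breaks down one step before the decay you want; obtaining $|G_k(s)|=O(|s|^{-k-1})$ requires a genuine asymptotic analysis (subtracting the continuous Cauchy kernel, or a dyadic/stationary-phase argument), which is a separate lemma of nontrivial length. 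If you are happy to import such estimates, your proof goes through; if not, the paper's Hausdorff--Young trick in Lemma \ref{Int-FS} gets you to the same conclusion with far less effort.
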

\begin{proof}
Take  an open subset  $U$  of $\mathbb C$ such that $f\in H(U)$ and $U\supset \bar B$.
When $h$ is sufficiently small, we have  $\partial B^h\subset U$ since $B^h$ converges to $B$. Therefore we can define
  $f^h$ as in (\ref{def:discrete-int}), i.e.,
$$f^h(\zeta)=\int_{\partial B^h}\mathcal{K}^h(z,\zeta)f(z)dS^h(z). $$
The first identity in Theorem \ref{Conv-D} has been  shown in Theorem \ref{Conv2}.

To prove the last two identities, notice that
$$\lim_{h\rightarrow0^+}\max_{ B}\Big\{|\partial_{z}f-\partial_{z}^hf|+|(\partial_{z})^2f-(\partial_{z}^h)^2f|\Big\}=0 $$
since $\partial^h$ converges to $\partial$ in $C^2(B)$.
Therefore, it is  sufficient  to show that
$$\lim_{h\rightarrow0^+}\max_{(B^h)^{\circ}\bigcap B}|\partial_{z}^hf-\partial_{z}^hf^h|=0, $$
$$\lim_{h\rightarrow0^+}\max_{(B^h)^{\circ \circ}\bigcap B}|(\partial_{z}^h)^2f-(\partial_{z}^h)^2f^h|=0. $$

According to Theorem \ref{MCIEq11}, we have $$f=f^h+\int_{B^h}E^h(\cdot-z)\partial_{\bar z}^hf(z)dV^h(z)\qquad \text{ on }B^h.$$  Applying   difference operators on both sides, we obtain
$$\partial_{z}^hf-\partial_{z}^hf^h=\int_{B^h}\partial_{z}^hE^h(\cdot-z)\partial_{\bar z}^hf(z)dV^h(z)\qquad \text{ on }(B^h)^{\circ}$$
and
$$(\partial_{z}^h)^2f-(\partial_{z}^h)^2f^h=\int_{B^h}(\partial_{z}^h)^2E^h(\cdot-z)\partial_{\bar z}^hf(z)dV^h(z) \qquad \text{ on }(B^h)^{\circ \circ}.$$
These together with the H\"older  inequality thus imply  that
$$\max_{(B^h)^{\circ}\bigcap B}|\partial_{z}^hf-\partial_{z}^hf^h|\leqslant\max_{B^h}|\partial_{\bar z}f|\left(\int_{\mathbb{Z}_h^2}
(\partial_{z}^hE^h)^2dV^h\right)^{1/2}\left(\int_{B^h}1dV^h\right)^{1/2} $$
and
$$\max_{(B^h)^{\circ\circ}\bigcap B}|(\partial_{z}^h)^2f-(\partial_{z}^h)^2f^h|\leqslant\max_{B^h}|\partial_{\bar z}f|\int_{\mathbb{Z}_h^2}|(\partial_{z}^h)^2E^h|dV^h. $$

On the other hand,  according to Lemmas \ref{T1} and \ref{T2}, we know
$$\max\limits_{B^h}|\partial_{\bar z}f|=O(h^2), \qquad \displaystyle{\int_{B^h}1dV^h=O(1)}.$$ By Lemma \ref{Int-FS}, we obtain
$$\int_{\mathbb{Z}_h^2}(\partial_{z}^hE^h)^2dV^h=
h^2\int_{\mathbb{Z}^2}\left(\frac{1}{h^2}\partial_{z}^1E^1\right)^2dV^1
=\frac{1}{h^2}||\partial_{z}^1E^1||_{L^2}^2=O(\frac{1}{h^2})$$
and
$$\int_{\mathbb{Z}_h^2}|(\partial_{z}^h)^2E^h|dV^h=h^2\int_{\mathbb{Z}^2}|
\frac{1}{h^3}(\partial_{z}^1)^2E^1|dV^1=
\frac{1}{h}||(\partial_{z}^1)^2E^1||_{L^1}=O(\frac{1}{h}). $$
Altogether,  the above results yield
 $$\max_{(B^h)^{\circ}\bigcap B}|\partial_{z}^hf-\partial_{z}^hf^h|\leqslant O(h^2)O(\frac{1}{h})O(1)=O(h), $$
and
$$\max_{(B^h)^{\circ\circ}\bigcap B}|(\partial_{z}^h)^2f-(\partial_{z}^h)^2f^h|\leqslant O(h^2)O(\frac{1}{h})=O(h). $$
This completes the proof.
\end{proof}

\begin{remark} Theorem \ref{Conv-D}  holds true under the weak condition that $f\in C^2(\overline{B})\bigcap H(B)$ provided that the specific case that $B^h=(B\bigcap\mathbb{Z}_h^2)^{\circ}$ is considered.
\end{remark}

\section{Appendix:  Technical lemmas}\label{S9}

Some technical results are included in this appendix.

\begin{remark} For any $B \subset \mathbb{Z}_h^2$,  its discrete boundary has two layers:
\begin{eqnarray*}
\partial^{+} B&:=&\partial B \cap B,
\\
 \partial^{-} B&:=&\partial B \setminus B.
\end{eqnarray*}
Either both are empty or both not.
\end{remark}
\begin{lemma}\label{T4} Let $B$ be a bounded open set in $\mathbb{C}$ and $B^h=(B\bigcap\mathbb{Z}_h^2)^{\circ}$. Then
$B^h$  converges to $B$.
\end{lemma}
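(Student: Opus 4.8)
The plan is to verify the four distance conditions of Definition~\ref{CDC} one at a time, writing $A^h := B\cap\mathbb{Z}_h^2$ so that $B^h=(A^h)^\circ = A^h\setminus\partial A^h$. Everything rests on one elementary \emph{interior-covering} fact, which I would prove directly (so as to avoid any circularity with Proposition~\ref{Interior-cover}): if $K\subset B$ is compact then $K\cap\mathbb{Z}_h^2\subseteq B^h$ for all sufficiently small $h$. Indeed $d:=\mathrm{dist}(K,\mathbb{C}\setminus B)>0$, and for $z\in K\cap\mathbb{Z}_h^2$ with $h<d$ the whole neighbourhood $N(z)$ lies in $\overline{B(z,h)}\subseteq B$, hence $N(z)\subseteq A^h$; this forces $z\notin\partial A^h$, so $z\in B^h$. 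The fourth condition is then trivial, since $B^h\subseteq A^h\subseteq B\subseteq\overline B$ gives $\min_{\beta\in\overline B}\|\alpha-\beta\|=0$ for every $\alpha\in B^h$.

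For the third condition I would first show by compactness that $\eta(\delta):=\sup_{\gamma\in\overline B}\mathrm{dist}\big(\gamma,\overline{B_{\delta}}\big)\to 0$ as $\delta\to0^+$, where $B_\delta:=\{x\in B:\mathrm{dist}(x,\mathbb{C}\setminus B)>\delta\}$ has compact closure contained in $B$. If this failed, a convergent subsequence $\alpha_n\to\alpha_\ast\in\overline B$ would stay at distance $\ge\epsilon_0$ from every $\overline{B_{\delta_n}}$; but any point of $B$ near $\alpha_\ast$ lies in $B_\delta$ for all small $\delta$, which contradicts this once $\alpha_n$ is close enough to $\alpha_\ast$. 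Given $\epsilon>0$, I then pick $\delta<\epsilon/2$ with $\eta(2\delta)<\epsilon/2$; for $\alpha\in\overline B$ I choose $\beta\in\overline{B_{2\delta}}$ with $\|\alpha-\beta\|<\epsilon/2$ and a lattice point $z$ with $\|z-\beta\|<h$. Then $\mathrm{dist}(z,\mathbb{C}\setminus B)\ge 2\delta-h>\delta$ for $h<\delta$, so $z\in B_\delta\cap\mathbb{Z}_h^2\subseteq B^h$ by the interior-covering fact applied to $K=\overline{B_\delta}$, and $\|\alpha-z\|<\epsilon$ once $h<\epsilon/2$. This yields the third condition.

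The second condition follows from a one-sided layer estimate: I claim $\partial B^h\subseteq\{x:\mathrm{dist}(x,\partial B)<3h\}$. If $z\in\partial B^h$, then $N(z)\cap B^h\neq\emptyset$, so $z$ lies within $h$ of $B$; were $\mathrm{dist}(z,\partial B)\ge 3h$, a segment from $z$ to a nearby point of $B$ could not cross $\partial B$, forcing $z\in B$ with $B(z,3h)\subseteq B$. The interior-covering computation then shows that every $w\in N(z)$ satisfies $N(w)\subseteq A^h$, hence $w\in B^h$, so $N(z)\subseteq B^h$, contradicting $z\in\partial B^h$. Therefore $\max_{\alpha\in\partial B^h}\mathrm{dist}(\alpha,\partial B)\le 3h\to0$.

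The first condition is the crux and the main obstacle. Given $\alpha\in\partial B$, the third condition already supplies a lattice point $p\in B^h$ with $\|p-\alpha\|<\epsilon$ for small $h$. I would then produce a nearby lattice point \emph{outside} $B^h$ and join it to $p$ by a lattice path inside a small ball about $\alpha$; since membership in $B^h$ switches along the path, it must contain a point of $\partial B^h$, which is then within $\epsilon$ of $\alpha$. The delicate step is the existence of that outside lattice point: it is guaranteed precisely when points of the complement $\mathbb{C}\setminus B$ approach $\alpha$ with positive width (for instance $\alpha\in\overline{\mathbb{C}\setminus\overline B}$), so that a genuine non-$B$ lattice point sits within $O(h)$ of $\alpha$. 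This is where a regularity hypothesis on $\partial B$ must enter — degenerate features such as slits invisible to the lattice would break it — and it is exactly the point at which the argument needs $B$ to be a reasonable (e.g.\ Jordan) domain; for such $B$ the crossing argument closes and gives $\max_{\alpha\in\partial B}\mathrm{dist}(\alpha,\partial B^h)\to0$, completing the proof.
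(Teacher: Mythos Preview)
Your outline mirrors the paper's proof closely: both verify the four conditions of Definition~\ref{CDC} via the same ingredients --- the elementary interior--covering fact (which the paper establishes inline in its step (ii) rather than isolating as you do), a direct layer estimate for $\partial B^h$ (the paper obtains $2h$ by a slightly sharper case split on $\partial^{\pm}B^h$; your $3h$ suffices), and a lattice--path crossing argument for the first condition.

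Your hesitation about that first condition is exactly right, and in fact the paper's own proof glosses over precisely this point. In its step (iv) the paper writes ``since $B(z,\epsilon)\setminus\overline B$ is open'' and then picks a lattice point in it, tacitly assuming the set is \emph{nonempty}; but for $z\in\partial B$ this can fail (e.g.\ $B$ a punctured disk, $z$ the puncture), and then $\partial B^h$ need not approach $z$ at all along subsequences with $z\notin\mathbb{Z}_h^2$. So the lemma, as stated for an arbitrary bounded open set, actually requires the mild regularity hypothesis you flag --- for instance that $B=(\overline B)^{\circ}$, equivalently that every boundary point is approachable from $\mathbb{C}\setminus\overline B$. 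Under that hypothesis your crossing argument and the paper's are the same and both close; without it, neither does, and the statement is not literally true.
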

\begin{proof} To prove that $\lim\limits_{h\rightarrow 0^{+}}B^h=B$, we need to verify four conditions in Definition \ref{CDC}.

(i) Since $B^h$ is contained in $B$, we have
$$\min_{\beta\in \overline{B}}||\alpha-\beta||=0, \qquad \forall\ \alpha\in B^h $$
so that
$$\lim_{h\rightarrow 0^+}\max_{\alpha \in B^h}\min_{\beta\in \overline{B}}||\alpha-\beta||=0. $$

(ii) Next we want to show  $$\lim_{h\rightarrow 0^+}\max_{\alpha \in \overline{B}}\min_{\beta\in B^h}||\alpha-\beta||=0, $$
i.e., for any $\epsilon>0$, there exists $\delta>0$, such that when $h\leq \delta$
we have
\begin{eqnarray}\label{def:lim-2-bh}\overline B\subset B^h+B(0, 2\epsilon).
\end{eqnarray}

To prove (\ref{def:lim-2-bh}), we let  $\epsilon >0$ and $z \in \overline{B}$ be given. Since $B$ is open, we can pick  $\delta>0$ and $z'\in B$ such that
$$B(z',\delta) \subset\subset B \bigcap B(z,\epsilon).$$
Therefore,
$$ B(z',\delta)\bigcap\mathbb{Z}_h^2\subset (B\bigcap\mathbb{Z}_h^2)^{\circ}= B^h$$
for any  $h$  sufficiently small. This implies
 $$B(z',\delta) \bigcap B^h = B(z',\delta) \bigcap \mathbb Z_h^2\neq \emptyset, \qquad \forall\ h<<1.$$
Since $B(z,\epsilon)\bigcap B^h\supset  B(z',\delta) \bigcap B^h$, it follows that
\begin{eqnarray} \label{Id:nonempity} B(z,\epsilon)\bigcap B^h\neq \emptyset, \qquad \forall\ h<<1.
 \end{eqnarray}

 Since $\overline{B}$ is compact, one can find $z_1,\dots,z_n$ such that $$\overline{B}\subset \bigcup\limits_{k=1}^{n}B(z_k,\epsilon).$$  As we have derived there exists $\eta_k>0$ such that $$B(z_k,\epsilon) \bigcap B^h \neq \emptyset$$ whenever $h<\eta_k$. Hence when $h<\min\limits_{1\leq k\leq n}\eta_k$, we have $$\overline{B}\subset B^h+B(0,2\epsilon) $$
as desired.

(iii) Now we  come to show
$$\lim_{h\rightarrow 0^+}\max_{\alpha \in\partial B^h}\min_{\beta\in\partial B}||\alpha-\beta||=0.$$
It is sufficient to verify that $$\min_{\beta\in\partial B}||z-\beta||\leqq 2h, \quad\forall\ z \in \partial B^h. $$
Its proof is split into two cases:

(a) case 1: $z \in \partial^{+}B^h$.

For any $\zeta \in \partial^{+}(B\bigcap\mathbb{Z}_h^2)$,
by definition we have $\zeta \in B$ and there is a point $\zeta'\in\mathbb{Z}_h^2\setminus B$ with $||\zeta'-\zeta||=h$. Since $\zeta \in B$ and $\zeta'\notin B$, we obtain $$\min_{\beta\in\partial B}||\zeta-\beta||\leqq h, \quad\forall\ \zeta \in \partial^{+}(B\bigcap\mathbb{Z}_h^2). $$

For any $z \in \partial^{+}B^h$, there exists $\zeta \in N(z)$ such that $\zeta\notin B^h$. Since $B^h$ is the discrete interior of $B\bigcap\mathbb{Z}_h^2$, we have $\zeta \in \partial^{+}(B\bigcap\mathbb{Z}_h^2)$, which implies $$\min_{\beta\in\partial B}||z-\beta||\leqq ||z-\zeta||+\min_{\beta\in\partial B}||\zeta-\beta||\leqq 2h. $$

(b) case 2: $z \in \partial^{-}B^h$.

By assumption we have  $B^h=(B\bigcap\mathbb{Z}_h^2)^{\circ}$ so that $$z \in \partial^{+}(B\bigcap\mathbb{Z}_h^2).$$ As shown above in case 1, we deduce that
$$\min_{\beta\in\partial B}||z-\beta||\leqq h. $$

(iv) Finally, we come to prove $$\lim_{h\rightarrow 0^+}\max_{\alpha \in\partial B}\min_{\beta\in\partial B^h}||\alpha-\beta||=0. $$
Since $\partial B$ is compact, it is sufficient to show that  for any $\epsilon >0$,
 there exists $\delta>0$ such that for any $h<\delta$ and $z\in \partial B$ we have $$\min_{\beta \in \partial B^h}||z-\beta||<\epsilon, $$

To verify this fact, we let $\epsilon >0$ and $z\in \partial B$ be given.
As shown in  (\ref{Id:nonempity}),  there exists $\eta>0$ such that  when $h<\eta$ we have $$B(z,\epsilon)\bigcap B^h \neq \emptyset.$$

 On the other hand, since $B(z,\epsilon)\setminus \overline{B}$ is open, there exist $\eta'>0$ such that $$B(z,\epsilon)\bigcap (\mathbb{Z}_h^2 \setminus \overline{B}) \neq \emptyset, \qquad \forall\ h<\eta'.$$ Thus if $h<\min\{\eta,\eta'\}$, one can find a sequence $$\{z_k\}_{k=1}^n \in \mathbb{Z}_h^2\bigcap B(z,\epsilon)$$
 with  the following properties:
$$\begin{array}{c}
z_1 \in B^h, \quad z_n \in \mathbb{Z}_h^2 \setminus B^h,  \quad
||z_k-z_{k+1}||=h \quad (1\leq  k\leq n-1).
\end{array}
$$
Now we take $$k_0=\max\{k: z_k \in B^h\},$$ which  means
$$z_{k_0} \in B^h, \qquad z_{k_0+1} \notin B^h.$$  According to the definition of discrete boundary, we have $z_{k_0} \in \partial B^h$, which implies that
$$\min_{\beta \in \partial B^h}||z-\beta||<\epsilon, \quad\forall \  h<\min\{\eta,\eta'\}. $$
This completes the proof.
\end{proof}

\begin{lemma}\label{T1}
Let $B$ be a bounded open set in $\mathbb{C}$ and $f \in C^3({\overline{B}})\cap H(B)$. If $B^h\subset B$, then we have
$$\max_{B^h}|\partial_{\bar z}^hf|=O(h^2). $$
\end{lemma}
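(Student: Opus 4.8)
The plan is to exploit two facts: that the symmetric difference $\partial_i^h$ is a centered, second-order accurate approximation of $\partial_i$, and that $\partial_{\bar z} f \equiv 0$ on $B$ because $f$ is holomorphic. The gain of one extra order (from $O(h)$ to $O(h^2)$) comes entirely from the symmetry of the discretization, so the whole proof is really a Taylor estimate for centered differences.

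First I would rewrite the symmetric difference operator as a centered difference: for each $i=1,2$,
$$\partial_i^h f(x) = \tfrac{1}{2}\bigl(\partial_i^{+,h} + \partial_i^{-,h}\bigr)f(x) = \frac{f(x + he_i) - f(x - he_i)}{2h}.$$
Next, for a fixed $x \in B^h$, I would Taylor expand $f(x \pm he_i)$ to third order; this is exactly where the hypothesis $f \in C^3(\overline{B})$ is used. In the difference $f(x+he_i)-f(x-he_i)$ the even-order ($h^0$ and $h^2$) terms cancel while the first-order terms reinforce, leaving
$$\partial_i^h f(x) = \partial_i f(x) + \frac{h^2}{12}\bigl(\partial_i^3 f(\xi_i^+) + \partial_i^3 f(\xi_i^-)\bigr)$$
for suitable intermediate points $\xi_i^\pm$ on the segment $[x-he_i,\,x+he_i]$. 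Since $f \in C^3(\overline{B})$ and $\overline{B}$ is compact, the third derivatives are bounded by a constant independent of $x$ and $h$, so $\partial_i^h f(x) = \partial_i f(x) + O(h^2)$ uniformly. Assembling the discrete $\bar\partial$-operator then gives
$$\partial_{\bar z}^h f(x) = \tfrac{1}{2}\bigl(\partial_1^h f(x) + i\,\partial_2^h f(x)\bigr) = \tfrac{1}{2}\bigl(\partial_1 f(x) + i\,\partial_2 f(x)\bigr) + O(h^2) = \partial_{\bar z} f(x) + O(h^2).$$
Finally, since $f$ is holomorphic on $B$ and $x \in B^h \subset B$, the Cauchy--Riemann equations yield $\partial_{\bar z} f(x) = 0$, whence $|\partial_{\bar z}^h f(x)| = O(h^2)$ with a constant uniform in $x$; taking the maximum over $B^h$ gives the claim.

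The main obstacle, and essentially the only subtle point, is making the remainder estimate genuinely legitimate and uniform, which requires that the sampled points $x \pm he_i$ actually lie in the region where $f$ is $C^3$. For $x \in B^h$ these neighbours sit within distance $h$ of $B$; in the principal case $B^h = (B\cap\mathbb{Z}_h^2)^\circ$ the entire neighbourhood $N(x)$ lies in $B$ by the very definition of the discrete interior, and in general one uses that $f \in C^3(\overline{B})$ extends to a $C^3$ function on an open neighbourhood of the compact set $\overline{B}$, so that for all sufficiently small $h$ every sampled point remains in a fixed compact set on which the third derivatives are bounded. With this taken care of, the $O(h^2)$ bound is uniform and the proof is complete.
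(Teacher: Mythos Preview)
Your proof is correct and follows essentially the same approach as the paper: both use the centered-difference form of $\partial_i^h$, subtract off the true derivative (which vanishes by holomorphicity), and bound the remainder via Taylor's theorem using the $C^3$ hypothesis. The paper's version is terser---it simply writes $\partial_{\bar z}^h f(z)$ as the sum of the two centered differences minus $2h$ times the true partials and invokes the mean value theorem---while you spell out the Taylor remainder and the domain issue for the sampled points, but the underlying argument is identical.
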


\begin{proof}Since $f$ is holomorphic, we have
$$\partial_{\bar z}^hf(z)=\frac{f(z+h)-f(z-h)-\partial_xf(z)2h}{2h}+i\frac{f(z+ih)-f(z-ih)-\partial_yf(z)2h}{2h}. $$ The conclusion follows  obviously from the mean value theorem.
\end{proof}

\begin{lemma}\label{T2}
Let $B$ be a bounded open set in $\mathbb{C}$. If $B^h$ converges to $B$,
then we have
$$\displaystyle{\int_{ B^h}}1dV^h=O(1). $$
\end{lemma}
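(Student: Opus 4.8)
The plan is to read the integral as a weighted count of lattice points and then dominate that count by the Euclidean area of a single fixed bounded region that swallows all of the sets $B^h$ for small $h$. First I would rewrite
$$\int_{B^h}1\,dV^h=\sum_{z\in B^h}h^2=h^2\,\#(B^h),$$
so that the claim $\int_{B^h}1\,dV^h=O(1)$ is equivalent to the bound $\#(B^h)=O(h^{-2})$ on the number of lattice points of $B^h$.

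To produce such a bound I would exploit that $B$ is bounded together with the convergence $B^h\to B$. Since $B$ is bounded, fix $R>0$ with $\overline B\subset B(0,R)$. Among the four defining conditions of Definition \ref{CDC}, the relevant one here is
$$\lim_{h\rightarrow 0^+}\max_{\alpha\in B^h}\min_{\beta\in\overline B}\|\alpha-\beta\|=0,$$
which guarantees that for all $h$ sufficiently small every point of $B^h$ lies within distance $1$ of $\overline B$. Hence $B^h\subset\overline B+\overline{B(0,1)}\subset B(0,R+1)$ uniformly in such $h$, reducing the problem to counting the lattice points of $\mathbb Z_h^2$ in the fixed ball $B(0,R+1)$.

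The counting step is then elementary: associating to each $z\in\mathbb Z_h^2$ the half-open cell $z+[0,h)^2$ produces a family of pairwise disjoint squares of area $h^2$, and the cells attached to points of $B(0,R+1)\cap\mathbb Z_h^2$ are all contained in $B(0,R+2)$ once $h<1$. Comparing the total cell area with the area of $B(0,R+2)$ gives
$$h^2\,\#\big(B(0,R+1)\cap\mathbb Z_h^2\big)\leqq\pi(R+2)^2,$$
and since $B^h\subset B(0,R+1)$ this yields $\int_{B^h}1\,dV^h\leqq\pi(R+2)^2=O(1)$. The only point that requires the hypotheses rather than pure arithmetic is the uniform containment of $B^h$ in one fixed ball; this is precisely where the convergence $B^h\to B$ and the boundedness of $B$ enter, and it is the step I would regard as the crux. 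Alternatively one could mimic the Riemann-sum argument used in Lemma \ref{W*Conv} with the constant function on a ball containing $\overline B$, but the direct volume comparison above is shorter.
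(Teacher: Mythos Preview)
Your proof is correct and follows essentially the same strategy as the paper: use the convergence $B^h\to B$ (specifically the condition $\max_{\alpha\in B^h}\min_{\beta\in\overline B}\|\alpha-\beta\|\to 0$) to trap all $B^h$ inside one fixed ball, and then bound the discrete volume of that ball uniformly in $h$. The only difference is cosmetic: the paper invokes the Riemann-sum interpretation to get $\int_{\mathbb Z_h^2}\chi_{B(0,R)}\,dV^h\to\pi R^2$, while you do the equivalent direct cell-packing estimate---which, as you note yourself, is the same argument spelled out by hand.
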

\begin{proof}
Since $B$ is bounded we may assume $B \subset\subset B(0,R)$. Therefore, when $h$ is small enough, we have $B^h\subset B(0,R)$ so that
 $$\displaystyle{\int_{ B^h}}1dV^h\leqq\displaystyle{\int_{ \mathbb{Z}_h^2}}\chi_{B(0,R)}dV^h.$$

According to the definition of the Haar measure $V^h$, the last integral  is identical to a Riemann sum, so that
$$\lim_{h\rightarrow0^+}\displaystyle{\int_{ \mathbb{Z}_h^2}}\chi_{B(0,R)}dV^h=\pi R^2. $$
Therefore,
$$\displaystyle{\int_{ B^h}}1dV^h =O(1) $$
as desired.
\end{proof}

\begin{lemma}\label{T3}
Function $E$ in (\ref{Fund-Sol}) belongs to $L^3(\mathbb{Z}^2)$.
\end{lemma}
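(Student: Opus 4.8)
The plan is to read off the numbers $E(x,y)$ as Fourier coefficients on the torus $\mathbb T^2=[-\pi,\pi]^2$ and then to invoke the Hausdorff--Young inequality. Writing
$$g(u,v)=\frac{2}{i\sin u-\sin v},$$
the family $\{e^{i(ux+vy)}\}_{(x,y)\in\mathbb Z^2}$ is an orthonormal basis of $L^2(\mathbb T^2)$ for the inner product $\tfrac1{4\pi^2}\int_{[-\pi,\pi]^2}$, so formula (\ref{Fund-Sol}) exactly exhibits $E(x,y)$ as the Fourier coefficient $\widehat g(-x,-y)$. Since the map $(x,y)\mapsto(-x,-y)$ is a bijection of $\mathbb Z^2$ and $L^3(\mathbb Z^2)$ is $\ell^3(\mathbb Z^2)$ with the counting measure, it suffices to prove $\widehat g\in\ell^3(\mathbb Z^2)$.

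First I would check that $g\in L^{3/2}(\mathbb T^2)$. Its modulus is
$$|g(u,v)|=\frac{2}{\sqrt{\sin^2 u+\sin^2 v}},$$
which is continuous and bounded away from the zero set of $\sin^2u+\sin^2 v$; on $\mathbb T^2$ that set is the four points $(0,0),(\pi,0),(0,\pi),(\pi,\pi)$. At each of them the denominator $i\sin u-\sin v$ vanishes to first order, so after translating the singularity to the origin and writing $w=(u',v')$ one has the Cauchy-type behaviour $|g|\sim 2/|w|$. Consequently $|g|^{3/2}\sim 2^{3/2}|w|^{-3/2}$ near each point, and since
$$\int_{|w|<\varepsilon}|w|^{-3/2}\,dw=2\pi\int_0^\varepsilon r^{-1/2}\,dr<\infty,$$
each singularity is locally $L^{3/2}$-integrable; combined with boundedness elsewhere this gives $\int_{\mathbb T^2}|g|^{3/2}<\infty$.

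The Hausdorff--Young inequality on the torus, applied with the conjugate pair $(\tfrac32,3)$, then yields
$$\|\widehat g\|_{\ell^3(\mathbb Z^2)}\le\|g\|_{L^{3/2}(\mathbb T^2)}<\infty,$$
whence $E\in L^3(\mathbb Z^2)$ as claimed. It is worth noting that the exponent is essentially forced: the same local computation shows $|g|^p$ is integrable precisely when $p<2$, so $g\in L^p(\mathbb T^2)$ for every $p<2$ but $g\notin L^2$; by Plancherel the latter means $E\notin\ell^2$, so $\ell^3$ (indeed any $\ell^q$ with $q>2$) is the natural target and a square-summable statement would be false.

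The only genuine content here is the identification of the singular set together with the verification that the four zeros of $i\sin u-\sin v$ are simple, so that $|g|\sim|w|^{-1}$; once that is granted, the integrability threshold and the appeal to Hausdorff--Young are routine, and I expect this to be the sole point requiring care. A more elementary but far more laborious alternative would avoid Hausdorff--Young by proving the pointwise decay $|E(x,y)|=O\bigl((1+|(x,y)|)^{-1}\bigr)$ via a partition of unity isolating the four poles and stationary-phase estimates, and then summing the convergent series $\sum_{(x,y)\ne 0}|(x,y)|^{-3}$.
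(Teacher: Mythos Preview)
Your argument is correct and follows essentially the same route as the paper: identify $E$ with the Fourier coefficients of $g(u,v)=2/(i\sin u-\sin v)$, verify $g\in L^{3/2}(\mathbb T^2)$ by checking that the zeros of $i\sin u-\sin v$ are simple so that $|g|\sim|w|^{-1}$ locally, and conclude via Hausdorff--Young. The additional remarks on sharpness and on the alternative pointwise-decay approach are correct but not needed for the lemma.
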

\begin{proof}
Recall that
$$E(x,y)=\frac{1}{4\pi^2}\int_{[-\pi,\pi]^2}\frac{2}{i\sin{u}-\sin{v}}e^{i(ux+vy)}dudv. $$
Namely, $E$ is the  Fourier   transform of function ${1}/({i\sin{u}-\sin{v}})$ up to a constant factor. The  Hausdorff-Young inequality thus tells us  that it is sufficient to show
 $$\frac{1}{i\sin{u}-\sin{v}} \in L^{{3}/{2}}(\mathbb{T}^2).$$
Its singular points are given by the solutions of equations
 $$\sin u=\sin v=0,$$
 or rather
 $$(u, v)\in\{0, \pi, -\pi\}^2.$$
 By periodicity, we only need to consider the singular point $(0, 0)$.
 At this point we have
 $$\frac{1}{|i\sin{u}-\sin{v}|}\simeq \frac{1}{\sqrt {u^2+v^2}}.$$
 It is evident that
 $$\frac{1}{\sqrt {u^2+v^2}} \in L^{{3}/{2}}(\mathbb{T}^2),$$
since
 $$\int_{[-\pi, \pi]^2} \left(\frac{1}{\sqrt {u^2+v^2}}\right)^{3/2}dudv<\infty.$$
This completes the proof. \end{proof}

 Next we show the integrability of the discrete derivatives of $E$ up to second order.
\begin{lemma}\label{Int-FS}
 The discrete derivatives of $E$  obey the integrability:

$$\partial_{z}^1E\in L^2(\mathbb{Z}^2), \qquad (\partial_{z}^1)^2E\in L^1(\mathbb{Z}^2). $$
\end{lemma}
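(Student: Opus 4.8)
The plan is to pass to the Fourier side and read off both statements from the symbols of the difference operators. Since $h=1$ on $\mathbb{Z}^2$, a direct computation gives $\partial_1^1 e^{i(ux+vy)}=i\sin u\,e^{i(ux+vy)}$ and $\partial_2^1 e^{i(ux+vy)}=i\sin v\,e^{i(ux+vy)}$, so that $\partial_z^1=\tfrac12(\partial_1^1-i\partial_2^1)$ acts on exponentials through the multiplier $\tfrac12(i\sin u+\sin v)=-\tfrac12\overline{m}$, where $m:=i\sin u-\sin v$. As $E$ is precisely the array of Fourier coefficients on $\mathbb{T}^2=[-\pi,\pi]^2$ of the symbol $\widehat{E}=2/m$ appearing in (\ref{Fund-Sol}), applying difference operators multiplies this symbol. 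Hence $\partial_z^1 E$ and $(\partial_z^1)^2 E$ are the Fourier coefficients of
$$g_1:=-\frac{\overline{m}}{m},\qquad g_2:=\frac{\overline{m}^2}{2m},$$
respectively. Throughout I identify $L^p(\mathbb{Z}^2)$ with the $\ell^p$-space of Fourier coefficients.

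The first claim is then immediate. Since $|m|^2=\sin^2 u+\sin^2 v$, we have $|g_1|\equiv 1$ on $\mathbb{T}^2$, so $g_1\in L^\infty(\mathbb{T}^2)\subset L^2(\mathbb{T}^2)$, and Parseval's identity yields $\partial_z^1 E\in L^2(\mathbb{Z}^2)$.

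For the second claim, note first that $|g_2|=\tfrac12|m|\le \tfrac1{\sqrt2}$ is bounded, so $g_2\in L^2$ and $(\partial_z^1)^2 E\in L^2(\mathbb{Z}^2)$; but $L^1$ is stronger and demands regularity of $g_2$, not merely boundedness. The strategy is to show $g_2\in H^s(\mathbb{T}^2)$ for some $s$ with $1<s<2$ and then to obtain absolute summability by Cauchy--Schwarz,
$$\sum_{n\in\mathbb{Z}^2}|\widehat{g_2}(n)|\le \Big(\sum_{n\in\mathbb{Z}^2}(1+|n|^2)^{-s}\Big)^{1/2}\Big(\sum_{n\in\mathbb{Z}^2}(1+|n|^2)^{s}|\widehat{g_2}(n)|^2\Big)^{1/2}=C\,\|g_2\|_{H^s},$$
where the first factor is finite because $2s>2$ in dimension two. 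To verify $g_2\in H^s$, observe that $g_2$ is real-analytic away from the finitely many points of $\mathbb{T}^2$ where $m=0$, i.e. where $\sin u=\sin v=0$; near each such point $m$ vanishes to first order, so $g_2=\overline{m}^2/(2m)$ vanishes to first order with leading term homogeneous of degree one. Consequently $g_2$ is Lipschitz, its gradient is bounded but homogeneous of degree zero, and its second derivatives grow like the reciprocal of the distance to the singular point. A homogeneity count then places $g_2$ in $H^s_{\mathrm{loc}}$ for every $s<2$; choosing $s=3/2$ completes the argument.

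The main obstacle is exactly this local regularity analysis at the points where $\sin u=\sin v=0$: there $g_2$ just fails to be $C^1$, so it is nowhere in $H^2$ near these points, and one must exploit the fractional window $1=d/2<s<2$ to gain enough decay of the Fourier coefficients for $\ell^1$-summability while keeping $\sum_n(1+|n|^2)^{-s}$ finite. Making the homogeneity count rigorous---for instance by freezing the leading homogeneous model at each singular point and estimating the fractional Sobolev seminorm of the degree-zero gradient---is the only delicate step; everything else reduces to the symbol computation together with Parseval and Cauchy--Schwarz.
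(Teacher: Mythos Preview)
Your symbol computation and the $L^2$ argument via Parseval are exactly the paper's. For the $L^1$ claim you take a close but not identical route: the paper observes that $g_2=\overline m^{\,2}/(2m)$ together with its integer derivatives up to second order lie in $L^{3/2}(\mathbb T^2)$, applies Hausdorff--Young to obtain $(1+|n|^2)\,\widehat{g_2}\in\ell^3(\mathbb Z^2)$, and then pairs against $(1+|n|^2)^{-1}\in\ell^{3/2}(\mathbb Z^2)$ via H\"older with exponents $(3,3/2)$. You instead aim for $g_2\in H^s(\mathbb T^2)$ with $1<s<2$ and conclude by Cauchy--Schwarz. Both hinge on the same local picture you correctly identify---near each zero of $m$ the leading part of $g_2$ is $r\phi(\theta)$ with $\phi$ smooth, so $\nabla g_2$ is bounded and $\nabla^2 g_2\sim r^{-1}$---and either packaging works. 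The paper's version has the advantage that it stays with integer-order derivatives and a concrete $L^p$ check ($r^{-1}\in L^{3/2}_{\mathrm{loc}}$ in two dimensions), so nothing about fractional Sobolev norms needs to be made precise; your version is conceptually cleaner but, as you note, requires justifying the fractional regularity of the degree-zero gradient, which is the only place any real work remains. If you want to finish your line without invoking $H^s$ directly, you can simply adopt the paper's shortcut: from $\partial^\alpha g_2\in L^{3/2}$ for $|\alpha|\le 2$ and Hausdorff--Young you get the weighted $\ell^3$ bound, and H\"older against $(1+|n|^2)^{-1}\in\ell^{3/2}$ gives $\ell^1$.
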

\begin{proof}
By definition, $$E(x,y)=\frac{1}{4\pi^2}\int_{[-\pi,\pi]^2}\frac{2}{i\sin{u}-\sin{v}}e^{i(ux+vy)}dudv. $$
A direct calculation shows that the discrete derivatives of $E$ has integral representations
\begin{eqnarray}\label{eq:order-one-derivative}\partial_{z}^1E(x,y)=\frac{1}{4\pi^2}\int_{[-\pi,\pi]^2}\frac{i\sin{u}
+\sin{v}}{i\sin{u}-\sin{v}}e^{i(ux+vy)}dudv \end{eqnarray}
and
\begin{eqnarray}\label{eq:order-two-derivative}(\partial_{z}^1)^2E(x,y)=\frac{1}{8\pi^2}\int_{[-\pi,\pi]^2}\frac{(i\sin{u}+\sin{v})^2}{i\sin{u}-\sin{v}}e^{i(ux+vy)}dudv. \end{eqnarray}

From (\ref{eq:order-one-derivative}), we find that up to a constant  $\partial_{z}^1E(x,y)$ is the Fourier transform of the function $$\displaystyle{\frac{i\sin{u}+\sin{v}}{i\sin{u}-\sin{v}} }\in L^2(\mathbb{T}^2).$$   The Hausdorff-Young inequality yields
 $$\partial_{z}^1E \in L^2(\mathbb{Z}^2).$$

Similarly,  (\ref{eq:order-two-derivative}) shows that up to a constant  $\partial_{z}^2E(x,y)$ is the Fourier transform of the function
$$\displaystyle{\frac{(i\sin{u}+\sin{v})^2}{i\sin{u}-\sin{v}}}.$$  It is easy to see that the last function, together with its derivatives up to second order,   belongs to $L^{3/2}(\mathbb{T}^2)$. Again by the Hausdorff-Young inequality, this implies that $$(1+x^2+y^2)(\partial_{z}^1)^2E\in L^3(\mathbb{Z}^2).$$  Since $(1+x^2+y^2)^{-1}\in L^{3/2}(\mathbb{Z}^2)$,  the H\"older inequality shows that $(\partial_{z}^1)^2E\in L^1(\mathbb{Z}^2)$. This completes the proof.
\end{proof}

Theorem \ref{MCIEq88} is about the holomorphicity of the Bochner-Matinelli kernel. We finally give its proof.

\begin{pot4}
By definition, we have
 \begin{eqnarray*}\partial_{\bar\zeta}^h\mathcal{K}^h(z,\zeta)
&=&\partial_{\bar\zeta}^hA(z-\zeta)n_1^{-}(z)+\partial_{\bar\zeta}^hB(z-\zeta)n_1^{+}(z)\\
&&+i\partial_{\bar\zeta}^hC(z-\zeta)n_2^{-}(z)+i\partial_{\bar\zeta}^hC(z-\zeta)n_2^{+}(z).                                                                 \end{eqnarray*}
We now calculate each summand on the left. By direct calculation, we get
 \begin{eqnarray*}                                                                    \partial_{\bar\zeta}^hA(z-\zeta)n_1^{-}(z)
 &=&-4^{-1}\partial_{\bar\zeta}E(h-z+\zeta)n_1^{-}(z)\\
 &=&-4^{-1}\delta_0^h(h-z+\zeta)n_1^{-}(z).
             \end{eqnarray*}
Similarly,
 \begin{eqnarray*}                                                                     \partial_{\bar\zeta}^hB(z-\zeta)n_1^{+}(z)
 &=&-4^{-1}\partial_{\bar\zeta}^hE(-h-z+\zeta)n_1^{+}(z)\\
 &=&-4^{-1}\delta_0^h(-h-z+\zeta)n_1^{+}(z)
             \end{eqnarray*}

 \begin{eqnarray*}                                                                     \partial_{\bar\zeta}^hC(z-\zeta)n_2^{-}(z)
 &=&-4^{-1}\partial_{\bar\zeta}^hE(hi-z+\zeta)n_2^{-}(z)\\
 &=&-4^{-1}\delta_0^h(hi-z+\zeta)n_2^{-}(z)
             \end{eqnarray*}
and
 \begin{eqnarray*}                                                                     \partial_{\bar\zeta}^hD(z-\zeta)n_2^{+}(z)
 &=&-4^{-1}\partial_{\bar\zeta}^hE(-hi-z+\zeta)n_2^{+}(z)\\
 &=&-4^{-1}\delta_0^h(-hi-z+\zeta)n_2^{+}(z).
             \end{eqnarray*}
Combining the above results to yield
\begin{eqnarray*}-4\partial_{\bar\zeta}^h\mathcal{K}^h(z,\zeta)
&=&\delta_0^h(h-z+\zeta)n_1^{-}(z)+\delta_0^h(-h-z+\zeta)n_1^{+}(z)\\
&&+i\delta_0^h(h-z+\zeta)n_2^{-}(z)
+ i\delta_0^h(-h-z+\zeta)n_2^{+}(z).
             \end{eqnarray*}
It follows immediately that $$\partial_{\bar\zeta}^h\mathcal{K}^h(z,\zeta)=0 \qquad\forall \quad\zeta \not\in N(z).$$
since $\delta_0^h(h-z+\zeta)$, $\delta_0^h(ih-z+\zeta)$ and $\delta_0^h(-ih-z+\zeta)$ all vanish whenever $\zeta \not\in N(z)$.

We now come to show that
$$\partial_{\bar\zeta}^h\mathcal{K}^h(z,\zeta)=0 \text{ for any }\zeta \not\in \partial B. $$
We only need to show the first item in the right side vanishes, i.e.
$$\delta_0^h(h-z+\zeta)n_1^{-}(z)=0, \qquad \forall \zeta\in \mathbb Z_h^2\setminus \partial B,\quad \forall z\in\partial B,$$
 since the remaining items also vanishes similarly.

 We may assume  $n_1^{-}(z)\neq 0$, otherwise there is nothing to prove.
 In this case, we have $$\partial_1^{-,h}\chi_B(z)\neq 0.$$
  That is,
 $$\chi_B(z)-\chi_B(z-h)\neq 0.$$
  This implies that either
  $z\in B $, $z-h \not\in B$ or $z \not\in B$, $z-h \in B$.
  In both cases, we have both  $z$ and  $z-h$ are in the boundary  $\partial B$
  so that
  $\delta_0^h(h-z+\zeta)=0$ for any $\zeta \not\in \partial B$.
  Hence the first item vanishes.
 \end{pot4}

\vskip2cm

\bibliographystyle{amsplain}

\end{document}